\newcommand{\be}{\begin{equation}}
\newcommand{\ee}{\end{equation}}
\newcommand{\CI}{{\mathcal C}^\infty }
\newcommand{\CIc}{{\mathcal C}^\infty_{\rm{c}} }
\newcommand{\CC}{{\mathbb C}}
\newcommand{\Q}{{\mathbb Q}}
\newcommand{\RR}{{\mathbb R}}
\newcommand{\TT}{{\mathbb T}}
\newcommand{\ZZ}{{\mathbb Z}}
\newcommand{\supp}{\operatorname{supp}}
\newcommand{\rest}{\!\!\restriction}
\theoremstyle{plain}
\newtheorem{thm}{Theorem}
\newtheorem{prop}{Proposition}[section]
\newtheorem{cor}[prop]{Corollary}
\newtheorem{lem}[prop]{Lemma}
\theoremstyle{definition}
\newtheorem{rem}[prop]{Remark}
\numberwithin{equation}{section}
\def\bbbone{{\mathchoice {1\mskip-4mu {\rm{l}}} {1\mskip-4mu {\rm{l}}}
{ 1\mskip-4.5mu {\rm{l}}} { 1\mskip-5mu {\rm{l}}}}}
\def\squarebox#1{\hbox to #1{\hfill\vbox to #1{\vfill}}}
\newcommand{\Id}{{I}}
\title[Control for Schr\"odinger operators on tori]
{Control for Schr\"odinger operators on tori}
\author[N. Burq]{Nicolas Burq}
\address{Universit{\'e} Paris Sud,
Math{\'e}matiques,
B{\^a}t 425, 91405 
Orsay Cedex, France, 
and Ecole Normale Sup\'erieure, 45, rue d'Ulm, 75005 
Paris,  Cedex 05, 
France}
\email{Nicolas.burq@math.u-psud.fr}
\author[M. Zworski]{Maciej Zworski}
\address{Mathematics Department, University of California, Berkeley, CA 94720, USA}
\email{zworski@math.berkeley.edu}
\def\11{{\rm 1~\hspace{-1.4ex}l} }
\def\R{\mathbb R}
\def\T{\mathbb T}
\begin{document}

\begin{abstract}
A well known result of Jaffard states that an 
arbitrary region on a torus controls, in the $ L^2 $
sense, solutions of the free stationary 
and dynamical Schr\"odinger equations.
In this note we show that the same result is valid
in the presence of a potential, that is for 
Schr\"odinger operators, $ - \Delta + V $, $ V \in \CI $.
\end{abstract}   

\maketitle   

\section{Introduction}   
\label{in}

We show how simple methods introduced in
\cite{BZ3}, \cite{BZ31}, \cite{Ma} (see also \cite{HHM} and \cite{Mac}) for the study of 
the equation 
\[   ( - \Delta - \lambda ) u ( z) = f ( z ) \,, \ \    z \in 
\T^2 : = \RR^2 / A \ZZ \times B\ZZ \,, \ \ A, B  \in \RR \setminus \{ 0 \}\,, \]
and the control of 
\[  i \partial_t u ( t, z ) =  - \Delta   u ( t , z) \,, \
\ z \in \T^2 \,.
\]
can be adapted to obtain similar results for the equations
\begin{equation}
\label{eq:st}   ( - \Delta + V ( z ) - \lambda ) u ( z) = f ( z ) \,, \ \    z
\in \T^2\,,\end{equation}
and 
\begin{equation}
\label{eq:dy}
 i \partial_t u ( t, z ) = ( - \Delta + V( z ) ) u ( t , z) \,, \
\ z \in \T^2 \,, \end{equation}
where $ V \in \CI ( \T^2 ) $ is a smooth real  valued
potential.

The first theorem concerns solutions of the stationary 
Schr\"odinger equation and is applicable to high
energy eigenfunctions:

\begin{thm}
\label{t:1}
Let $ \Omega \subset \T^2 $ be any open set.
There exists a constant $ K = K ( \Omega ) $, depending only 
on $ \Omega $, such that 
for any solution of \eqref{eq:st} we have
\begin{equation}
\label{eq:t1}
\| u \|_{ L^2 ( \T^2) } \leq K \left( \| f \|_{ L^2 ( \T^2) }  + \| u \|_{ L^2 ( \Omega ) } \right) \,. 
\end{equation}
\end{thm}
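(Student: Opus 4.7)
The plan is to argue by contradiction via a semiclassical defect-measure argument, with $V$ treated as a subprincipal perturbation of $-\Delta$. Suppose the estimate fails: then there exist sequences $\lambda_n \in \C$, $u_n, f_n \in L^2(\T^2)$ with $(-\Delta + V - \lambda_n)u_n = f_n$, $\|u_n\|_{L^2(\T^2)} = 1$, and $\|f_n\|_{L^2} + \|u_n\|_{L^2(\Omega)} \to 0$.

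I split on the behavior of $\lambda_n$. If $\lambda_n$ stays bounded, elliptic regularity yields a uniform $H^2$ bound on $u_n$; Rellich then produces an $L^2$-subsequential limit $u$ solving $(-\Delta + V - \lambda)u = 0$ with $\|u\|_{L^2} = 1$ and $u|_\Omega \equiv 0$, and Aronszajn unique continuation on the connected manifold $\T^2$ forces $u \equiv 0$, a contradiction. If $\Re \lambda_n \to -\infty$, uniform invertibility of $-\Delta + V - \lambda_n$ forces $\|u_n\|_{L^2} \to 0$, again impossible. So assume $\lambda_n \to +\infty$ and set $h_n = \lambda_n^{-1/2}$; the equation becomes $P_n u_n = g_n$ with $P_n = -h_n^2\Delta - 1$, principal symbol $p(z,\xi) = |\xi|^2 - 1$, and $g_n = h_n^2(f_n - V u_n) = O_{L^2}(h_n^2)$.

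Extracting a semiclassical defect measure $\mu$ on $T^*\T^2$ along a subsequence, standard arguments give: $\mu$ has total mass $1$ (by $h_n$-oscillation from ellipticity off $\{|\xi|^2 = 1\}$); $\supp \mu \subset \{|\xi|^2 = 1\}$; $\mu$ is invariant under the geodesic flow of $p$, since the $h_n^2 V$ term contributes only $O(h_n)$ to $(i/h_n)[P_n + h_n^2 V, \Op(a)]$ and drops out in the limit; and $\mu(\pi^{-1}(\Omega)) = 0$ since $\|u_n\|_{L^2(\Omega)} \to 0$. Deriving $\mu \equiv 0$ from these properties is the decisive step and will yield the contradiction $1 = \|u_n\|_{L^2} \to 0$.

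In each irrational cotangent direction the geodesic flow on $\T^2$ equidistributes, so invariance together with $\mu(\pi^{-1}(\Omega)) = 0$ forces vanishing immediately. The main obstacle is the countable family of rational directions, in which the geodesics foliate $\T^2$ by parallel closed curves and individual leaves may avoid $\Omega$. Here I would follow the second-microlocalization strategy of \cite{BZ3,BZ31,Ma}, zooming into each rational fibre with an auxiliary semiclassical parameter $\th$ to obtain a two-scale measure which is invariant not only under the degenerate geodesic flow but also under the Hamiltonian flow of the averaged potential $\la V \ra$ — the mean of $V$ along the closed geodesics in that direction. Smoothness of $V$ ensures that $\la V \ra$ is a well-behaved symbol and that the commutator expansions close, and the resulting transverse dynamics propagates the vanishing on $\Omega$ throughout each rational fibre. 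This rational-direction analysis — essentially a perturbation of the Jaffard/\cite{BZ3,BZ31,Ma} free-case argument by a smooth potential — is the technical core and the main obstacle of the proof.
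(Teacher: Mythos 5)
Your direct approach to Theorem~\ref{t:1} departs from the paper at the very first step: the paper does not prove the stationary estimate directly, but deduces it from the observability inequality (Theorem~\ref{t:2}) via Duhamel, and expends all its effort on the dynamical problem. The three-way split on $\lambda_n$ that you propose is a legitimate parallel route, and it has the merit of avoiding the uniqueness-compactness argument of Bardos--Lebeau--Rauch that the paper must invoke in \S\ref{fse} to remove the $H^{-2}$ error term. (Do note that for $\lambda\in\CC$ with $|\Im\lambda_n|$ bounded away from zero one has $\|u_n\|_{L^2}\leq \|f_n\|_{L^2}/|\Im\lambda_n|$ directly, so you should first reduce to $\Im\lambda_n\to 0$ before splitting on $\Re\lambda_n$; that case is not covered in your sketch.) For $\lambda_n\to+\infty$ your semiclassical setup --- defect measure of mass one, support in $\{|\xi|=1\}$, invariance modulo $O(h_n)$, and elimination of irrational directions by equidistribution --- coincides with the paper's proof of Proposition~\ref{th:3}.

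The genuine divergence, and the place where the proposal leaves a gap, is the rational directions. The paper does not use second microlocalization there. It observes that the projection $M_\mu$ of $\supp\mu$ to the direction variable is a countable \emph{closed} set, hence contains an \emph{isolated} point $\Xi_0$; it then localizes in frequency near that one direction, applies a Moser--Weinstein normal form (Propositions~\ref{p:2}, \ref{p:2b} and Corollaries~\ref{cor.1}, \ref{cor.2}) conjugating $-\Delta+V$ by $\Id+hQ$ to $D_x^2+D_y^2+V_0(x)$ plus remainders vanishing on that fiber, and reduces to a one-dimensional observability estimate (Lemma~\ref{lem.2.3}), itself proved by a 1D defect-measure argument in the appendix. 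You instead invoke the two-scale second-microlocal measures of \cite{BZ3,BZ31,Ma} (cf.\ also \cite{AM}), propagating vanishing along each rational fiber under the flow of the averaged potential $\la V\ra$. Conceptually the two routes exploit the same averaged dynamics, and the two-scale route is known to be viable for a smooth potential, but it is heavier machinery than the normal form; more importantly, in your write-up it is only gestured at (``I would follow the strategy''), and the isolated-point observation that makes the paper's single-direction frequency cutoff clean is absent. As written, the rational-fiber step --- precisely the part that distinguishes $V\not\equiv 0$ from the Jaffard/Haraux free case --- remains the acknowledged open core of the argument rather than a proof.
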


This means that $ u $ on $ \T^2 $ is {\em controlled} by $ u
$ in $ \Omega$, in the $ L^2$ sense. 
The next result, which is in fact more general, concerns 
the dynamical Schr\"odinger equation:

\begin{thm}
\label{t:2}
Let $ \Omega \subset \T^2$ be any (non empty) open set and let $ T > 0 $.
There exists a constant $ K = K ( \Omega, T  ) $, depending only 
on $ \Omega $ and $ T $, such that 
for any solution of \eqref{eq:dy} we have
\begin{equation}
\label{eq:t2} \| u ( 0 , \bullet ) \|^2_{ L^2 ( \T^2) } \leq K  \int_0^T \|
u ( t , \bullet )  \|^2_{ L^2 ( \Omega ) } dt \,. 
\end{equation}
\end{thm}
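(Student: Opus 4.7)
The plan is to derive Theorem~\ref{t:2} from Theorem~\ref{t:1} via a Fourier-transform-in-time argument. The crucial feature of \eqref{eq:t1} is that the constant $K(\Omega)$ is uniform in the spectral parameter $\lambda$, so that Plancherel will convert a pointwise resolvent bound into a time-integrated observability statement.

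Concretely, I would fix a cutoff $\psi \in \CIc((0, T))$ and set $v(t, z) := \psi(t) u(t, z)$. Since $u$ solves \eqref{eq:dy}, $v$ is compactly supported in $t$ and satisfies
\[
i\partial_t v - (-\Delta + V) v = i\psi'(t)\, u(t, z).
\]
Taking a Fourier transform in $t$ with dual variable $\tau$, the spatial function $\hat v(\tau, \cdot)$ solves the stationary equation
\[
(-\Delta + V - \lambda)\hat v(\tau, \cdot) = -i\, \widehat{\psi' u}(\tau, \cdot), \qquad \lambda = -\tau,
\]
and Theorem~\ref{t:1} applied pointwise in $\tau$ yields
\[
\|\hat v(\tau, \cdot)\|_{L^2(\T^2)}^2 \leq 2K^2 \|\widehat{\psi' u}(\tau, \cdot)\|_{L^2(\T^2)}^2 + 2K^2 \|\hat v(\tau, \cdot)\|_{L^2(\Omega)}^2.
\]
Integrating in $\tau$, applying Plancherel, and using the $L^2$-conservation law $\|u(t, \cdot)\|_{L^2(\T^2)} = \|u(0, \cdot)\|_{L^2(\T^2)}$ for the Schr\"odinger flow, this becomes
\[
\|u(0)\|_{L^2(\T^2)}^2 \int |\psi|^2\, dt \leq 2K^2 \|u(0)\|_{L^2(\T^2)}^2 \int |\psi'|^2\, dt + 2K^2 \int |\psi(t)|^2 \|u(t, \cdot)\|_{L^2(\Omega)}^2\, dt.
\]
Rescaling $\psi(t) = \psi_0(t/T)$ for a fixed $\psi_0 \in \CIc((0,1))$ makes the ratio $\int|\psi'|^2/\int|\psi|^2$ of order $T^{-2}$, so once $T$ exceeds a threshold $T_0(\Omega)$ the first term on the right absorbs into the left and \eqref{eq:t2} follows.

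The main obstacle is that this direct argument only produces \eqref{eq:t2} for $T$ beyond some $\Omega$-dependent threshold, whereas the statement asks for every $T > 0$. To close this gap one invokes the principle, traced to Haraux, that for a unitary group whose generator has discrete spectrum (here $-\Delta + V$ on $\T^2$), observability at some positive time automatically implies observability at every positive time, with a $T$-dependent constant. This relies on the almost-periodic structure of $t \mapsto \|e^{-it(-\Delta + V)} u_0\|_{L^2(\Omega)}^2$ together with a standard compactness-uniqueness argument using unique continuation for \eqref{eq:dy} to rule out nontrivial weak limits.
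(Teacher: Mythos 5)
Your proposal has two difficulties: one logical, one substantive.

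The logical issue: in the paper's framework Theorem~\ref{t:1} is a \emph{consequence} of Theorem~\ref{t:2}, obtained by applying \eqref{eq:t2} to $u(t,z)=e^{-it\lambda}u(z)$ together with Duhamel; the paper never proves the resolvent estimate \eqref{eq:t1} independently. Deducing Theorem~\ref{t:2} from Theorem~\ref{t:1} is therefore circular unless you supply a separate proof of the latter, which you do not. Moreover, the stationary estimate is not intrinsically easier than observability --- the two are essentially interchangeable via the very Fourier-in-time dictionary you use --- so an independent proof of Theorem~\ref{t:1} would face exactly the same obstacles (normal form, semiclassical measures, reduction to dimension one) that the paper addresses.

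The substantive gap is the final step, the passage from observability for $T$ beyond a threshold $T_0(\Omega)$ to observability at \emph{every} $T>0$. Your cutoff-and-Plancherel computation is correct and does yield \eqref{eq:t2} once $T^2\gg K(\Omega)^2$. But the ``Haraux principle'' you invoke --- that for a unitary group whose generator has discrete spectrum, observability at one positive time implies observability at all positive times --- is not a valid general fact. The wave group already refutes it: its generator has discrete spectrum, yet finite propagation speed imposes a strictly positive minimal control time. For the Schr\"odinger group the conclusion does hold, but it holds because of the quadratic dispersion relation, i.e.\ because data at frequency $h^{-1}$ sweeps out classical time $\sim T/h\to\infty$ during physical time $T$. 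Extracting that requires either the semiclassical argument of Proposition~\ref{th:3} --- which gives observability at every $T>0$ for $h$-localized data, with constants uniform in $h$, and is the heart of the paper --- or, in the free case $V\equiv 0$, the Kahane--Haraux--Jaffard lacunary-series analysis. The quantitative almost-periodicity of $t\mapsto\|e^{-it(-\Delta+V)}u_0\|_{L^2(\Omega)}^2$ that you appeal to \emph{is} Kahane's theorem when $V=0$, and it is precisely what is not available for general $V$; producing an alternative is the content of the paper. Finally, the Bardos--Lebeau--Rauch compactness-uniqueness argument removes a compact remainder from an observability inequality that already holds on a given time interval; it cannot shrink the observation time, so it does not close this gap either.
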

An estimate of this type is called {\em an observability}  result.
Once we have it, the  HUM
method  (see~\cite{Li}) automatically provides the following {\em
  control} result:
\begin{thm}
\label{t:3}
Let $ \Omega \subset \T^2$ be any (non empty) open set and let $ T > 0 $. For any $u_0 \in L^2(\T^2)$, there exists $f\in L^2((0,T)\times \Omega)$ such that the solution of the equation
$$ (i\partial_t +\Delta - V(z) ) u (t,z)= f \bbbone_{(0,T) \times
  \Omega}(t,z) \,,  \qquad  u ( 0 , \bullet )  = u_0\,, $$
satisfies 
$$ u ( T , \bullet )  \equiv 0 \,. $$
\end{thm}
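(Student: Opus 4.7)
The plan is to deduce Theorem~\ref{t:3} from the observability estimate of Theorem~\ref{t:2} via the standard HUM duality argument of Lions \cite{Li}. Since $V$ is real, $P := -\Delta + V$ is self-adjoint on $L^2(\T^2)$ and the propagator $S(t) := e^{-itP}$ is unitary. For any $v_T \in L^2(\T^2)$, the function $v(t) := S(t-T)v_T$ satisfies $(i\partial_t + \Delta - V)v = 0$ with $v(T,\cdot) = v_T$, so applying Theorem~\ref{t:2} to this $v$ together with the unitarity of $S$ gives the dual observability
\[
\|v_T\|^2_{L^2(\T^2)} \,=\, \|v(0,\cdot)\|^2_{L^2(\T^2)} \,\leq\, K \int_0^T \|v(t,\cdot)\|^2_{L^2(\Omega)}\, dt.
\]

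For fixed $u_0 \in L^2(\T^2)$, I would then consider the functional
\[
J(v_T) := \tfrac{1}{2}\int_0^T\!\!\int_\Omega |v(t,z)|^2\, dz\, dt \,+\, \Re\langle u_0, v(0,\cdot)\rangle_{L^2(\T^2)}
\]
on $L^2(\T^2)$, with $v(t) := S(t-T)v_T$. Continuity of the linear part is immediate from unitarity of $S$, and coercivity of the quadratic part is exactly the dual observability above; hence $J$ is continuous, strictly convex and coercive, so it attains its infimum at a unique $\hat v_T$. The Euler--Lagrange equation at $\hat v_T$ reads
\[
\int_0^T \!\!\int_\Omega \hat v(t,z)\,\overline{w(t,z)}\, dz\, dt \,+\, \langle u_0, w(0,\cdot)\rangle_{L^2(\T^2)} \,=\, 0 \qquad \forall\, w_T \in L^2(\T^2),
\]
where $\hat v(t) := S(t-T)\hat v_T$ and $w(t) := S(t-T)w_T$.

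Finally, I would set the control $f(t,z) := i\,\hat v(t,z)$ on $(0,T)\times\Omega$, and let $u$ be the Duhamel solution of $(i\partial_t + \Delta - V)u = f\bbbone_{(0,T)\times\Omega}$ with $u(0,\cdot) = u_0$; this $u$ lies in $C([0,T]; L^2(\T^2))$ by unitarity of $S$. Pairing the equation against a backward solution $w$ and integrating by parts in $t$ (using self-adjointness of $\Delta - V$ in space) yields the duality identity
\[
\langle u(T,\cdot), w_T\rangle \,-\, \langle u_0, w(0,\cdot)\rangle \,=\, -i\int_0^T \!\!\int_\Omega f(t,z)\,\overline{w(t,z)}\, dz\, dt.
\]
Substituting $f = i\hat v$ converts the right-hand side into $\int_0^T\!\!\int_\Omega \hat v\,\overline w\, dz\, dt$, and the Euler--Lagrange identity above then forces $\langle u(T,\cdot), w_T\rangle = 0$ for every $w_T \in L^2(\T^2)$, hence $u(T,\cdot) \equiv 0$. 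The only substantive ingredient in this scheme is the coercivity of the quadratic part of $J$, which is precisely the content of Theorem~\ref{t:2}; the remaining pieces --- $L^2$ well-posedness of the controlled Cauchy problem, the duality identity, and the variational argument --- are entirely standard.
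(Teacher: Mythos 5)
Your proof is correct and takes exactly the route the paper indicates: the authors state that, given the observability estimate of Theorem~\ref{t:2}, the HUM method of Lions \cite{Li} ``automatically provides'' Theorem~\ref{t:3}, and they leave the details implicit. You have written out precisely that standard duality argument --- dual observability via unitarity, minimization of the HUM functional $J$, and the duality identity forcing $u(T,\cdot)\equiv 0$ --- so your proposal fills in the paper's one-line citation without deviating from it.
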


By applying Theorem~\ref{t:2} to the initial data
$u(0,\bullet) = u$, it is easy to see that Theorem~\ref{t:1} follows
from Theorem \ref{t:2} and the Duhamel formula. As a consequence, we will 
restrict our attention to Theorem~\ref{t:2}.

In the case of $ V \equiv 0 $ 
the estimates \eqref{eq:t1} and~\eqref{eq:t2} were proved
by Jaffard \cite{Ja} and 
Haraux \cite{Ha} using Kahane's work \cite{Ka}
on lacunary Fourier series. 

For a presentation of control theory for the Schr\"odinger equation
we refer to \cite{Le} -- see also \cite{Bu},\cite{TW},  and \cite[\S 3]{BZ2}.

We conclude this introduction with comments about a natural class
of potentials for which the theorems above should hold. When 
$ V \in L^\infty $ and $ \| V \|_{L^\infty } \ll 1 $ a perturbation 
argument shows that \eqref{eq:t1} and \eqref{eq:t2} follow from 
results with $ V = 0 $. 

The methods of this paper can be 
extended to the case of $ V \in C^0 ( \TT^2 ) $ by first showing that
the constant in the high frequency estimate \eqref{eq:semi} is
independent of $ V $ for $ V $ in a bounded subset of $ L^\infty $
and then using approximation and a perturbation argument. The 
restriction that $ V $ is real is not essential but makes the 
writing easier as we can use the calculus of self-adjoint operators.

\medskip
\noindent
{\bf Conjecture.} {\em Theorems \ref{t:1},\ref{t:2},\ref{t:3} hold for 
$ V \in L^\infty ( \TT^2 ; \CC ) $. Theorems \ref{t:2} and \ref{t:3} 
hold for time dependent potentials $ V ( t , z ) \in L^\infty ( \RR
\times \TT^2 ; \CC ) $. }

\medskip
\noindent
{\sc Acknowledgments.} 
We would like to thanks Semyon Dyatlov, Luc Hillairet, and Claude Zuily for 
helpful conversations.
The first author acknowledges partial support from Agence Nationale de
la Recherche project ANR-07-BLAN-0250 and 
the second author acknowledges partial 
support by the National Science Foundation under the grant 
DMS-0654436. He is also grateful to Universit\'e de Paris-Nord
for its generous hospitality in the Spring 2011 when this paper
was written. 

\section{Preliminaries}
\label{pr} 
In this section we will recall the basic control result \cite{Bu92},\cite{BZ3}
for rectangles, and the normal form theorem based on 
Moser averaging method \cite{W}.

\renewcommand\thefootnote{\dag}%

The following result~\cite{Bu92} is related to some earlier control results 
of Haraux~\cite{Ha} and  Jaffard~\cite{Ja}\footnote{We remark that as noted in \cite{Bu92} 
the result holds for 
any product manifold $M= M_{x}\times M_{y}$, and the proof is essentially the same.}:
\begin{prop}
\label{p:1} Let $\Delta$ be  the Dirichlet, Neumann, or periodic
Laplace operator on the rectangle $R= [0, a]_{x} \times [0,b]_{y}$. 
Then for any open non-empty  $\omega\subset R$ of the form $
\omega= \omega_{x} \times [0,a]_{y}$ , there 
exists $C$ such that for any solutions of
\begin{equation}
(\Delta -z) u =f \ \text{ on $R$}, \ u \rest_{\partial R}=0
\end{equation}
we have
\begin{equation}
\label{eq:6.12}\|u\|^2_{{L^2(R)}}\leq C \left(\|f \|^2_{L^2([0,b]_{y}); H^{-1} (
[0,a]_{x} ) } +
\|u \|^2_{{L^2(\omega)}} \right)
\end{equation}
\end{prop}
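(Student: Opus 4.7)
The plan is to reduce \eqref{eq:6.12} to a one-parameter family of one-dimensional control estimates by separating variables in $y$. Let $\{\psi_k\}_{k\ge 0}$ be an orthonormal basis of $L^2([0,b]_y)$ consisting of eigenfunctions of $-\partial_y^2$ with the prescribed boundary conditions, with eigenvalues $\mu_k\ge 0$, and expand
$$ u(x,y) = \sum_k u_k(x)\,\psi_k(y), \qquad f(x,y) = \sum_k f_k(x)\,\psi_k(y). $$
The equation $(\Delta - z)u = f$ then decouples into the family
$$ \bigl(\partial_x^2 - (\mu_k + z)\bigr)\,u_k(x) = f_k(x), \qquad x\in[0,a], $$
with the boundary conditions at $x\in\{0,a\}$ inherited from those on $u$. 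The product structure of $\omega$ is essential here: Parseval in $y$ gives
$$ \|u\|^2_{L^2(\omega)} = \sum_k \|u_k\|^2_{L^2(\omega_x)}, \qquad \|u\|^2_{L^2(R)} = \sum_k \|u_k\|^2_{L^2([0,a])}, $$
and similarly $\|f\|^2_{L^2_yH^{-1}_x} = \sum_k \|f_k\|^2_{H^{-1}([0,a])}$.

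It therefore suffices to prove the following one-dimensional statement: for every non-empty open $\omega_x\subset[0,a]$ there exists $C=C(\omega_x)$, \emph{independent of} $\tau\in\CC$, such that
$$ \|v\|^2_{L^2([0,a])} \le C\Bigl(\|(-\partial_x^2+\tau)v\|^2_{H^{-1}([0,a])} + \|v\|^2_{L^2(\omega_x)}\Bigr) $$
for all $v$ satisfying the given boundary conditions at $\{0,a\}$. Applying this with $\tau=-(\mu_k+z)$ to each $u_k$ and summing over $k$ yields \eqref{eq:6.12} with the same constant.

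To prove the one-dimensional estimate I would split the parameter $\tau$ into two regimes. For $|\tau|$ bounded, a standard contradiction/compactness argument closes the loop: rescaling a putative sequence of counterexamples and extracting a convergent subsequence produces, in the limit, a non-zero solution of a second-order linear ODE vanishing on the open set $\omega_x$, which is impossible by uniqueness of the Cauchy problem. For $|\tau|$ large, the homogeneous equation has two explicit linearly independent solutions (trigonometric or exponential, according to the sign and argument of $\tau$), and an elementary computation shows that on any fixed non-empty subinterval their $L^2$-mass is comparable to their total $L^2$-mass, with constants bounded as $|\tau|\to\infty$; in the oscillatory case this amounts to the classical averaging $\frac{1}{|I|}\int_I\sin^2(kx+\phi)\,dx\to\tfrac12$ as $k\to\infty$. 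A variation-of-constants representation promotes the bound to the inhomogeneous problem. The one non-routine step is precisely the uniformity of $C$ as the frequency $\sqrt{|\tau|}$ diverges --- this is the main obstacle, and is the content of the one-dimensional observability result proved in \cite{Bu92}. Once it is in hand, the two-dimensional estimate drops out by Parseval-style summation, with no assumption on the spectrum $\{\mu_k\}$ of $-\partial_y^2$.
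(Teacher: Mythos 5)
Your proposal follows the paper's proof exactly: expand $u$ and $f$ in the $y$-eigenfunctions of $-\partial_y^2$ with the prescribed boundary conditions, use the product structure of $\omega$ and Parseval to decouple, reduce to a one-dimensional observability estimate uniform in the spectral parameter $\tau$, and sum over modes. The paper likewise leaves the one-dimensional estimate to a reference (\cite{BZ3}) or to the later propagation argument, so your two-regime sketch for that step (compactness for bounded $\tau$, explicit solutions and frequency averaging for $|\tau|$ large) is a fair account of the same outstanding ingredient.
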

\begin{proof}
We will consider the Dirichlet case (the proof is the same in the other two 
cases) and 
decompose $u,f$ in terms of 
the basis of $L^2([0,b])$ formed by the Dirichlet eigenfunctions
$e_{k}(y)=  { \sqrt {{2}/b}}\sin(2k\pi y/b)$,
\begin{equation}
u(x,y)= \sum_{k}e_{k}(y) u_{k}(x), \qquad f(x,y)= \sum_{k}e_{k}(y) f_{k}(x)
\end{equation}
we get for $u_{k}, f_{k}$ the equation
\begin{equation}\label{estres.1}
\left(\Delta_{x}-\left(z+\left({2k\pi}/{b}\right)^2\right)\right)u_{k}= f_{k},\qquad u_{k}(0)=u_k(1)=0
\end{equation}
We now claim that 
\begin{equation}
\label{eq:cont}
\|u_{k}\|^2_{{L^2([0,1]_{x})}}\leq C \left(\|f_k \|^2_{H^{-1}([0,1]_{x})} +
\|u_k \rest _{\omega_{x}}\|^2_{{L^2(\omega)}}\right) 
\end{equation}
from which, by summing the squares in $k$, we get~\eqref{eq:6.12}.

To see \eqref{eq:cont} we can use the propagation result below in dimension
one, but in this case an elementary calculation is easily available -- see
\cite{BZ3}.
\end{proof}

The next proposition is the dynamical version of Proposition \ref{p:1}. 
However we change the assumptions on $ u $.

\begin{prop}
\label{p:1bis} 
Let $ R = [ 0, a ]_x \times [0, b]_y $, and let  $ \omega = \omega_x
\times [0, b ] $, where $ \omega_x $ is an open subset of $ [ 0 , b ] $.
Suppose that for $ W \in \CI ( \RR ) $, $ W ( x + a ) = W ( x ) $, 
\begin{gather}
\label{eq:1bis}
\begin{gathered}
 i \partial_t u ( t, x , y) = (- \Delta + W ( x ) ) u ( t, x, y ) \,, 
\,, 
\end{gathered}
\end{gather}
and that, for some $ \gamma \in \RR $, $ u $ satisfies the following periodicity condition:
\begin{equation}
\label{eq:1bisb}  u ( t, x + k a , y + \ell b ) = u ( t , x , y + k \gamma ) \,, \ \
k, \ell \in \ZZ \,. 
\end{equation}

Then 
\begin{equation}
\label{eq:1bis1}
 \| u ( 0 , \bullet ) \|^2_{ L^2 (R ) } \leq K  \int_0^T \|
u  ( t, \bullet ) \|^2_{ L^2 ( \omega ) } dt   \,. 
\end{equation}
\end{prop}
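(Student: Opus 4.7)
The plan is to adapt the argument of Proposition~\ref{p:1} to the dynamical setting by Fourier-expanding $u$ in the $y$ variable, thereby reducing~\eqref{eq:1bis1} to a one-parameter family of one-dimensional Schr\"odinger observability estimates, which are then summed via Parseval.

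\emph{Step 1 (Fourier decomposition in $y$).} Taking $(k,\ell)=(0,1)$ in~\eqref{eq:1bisb} gives $u(t,x,y+b)=u(t,x,y)$, so we may write
\[
  u(t,x,y)=\sum_{n\in\ZZ} u_n(t,x)\, e^{2\pi i n y/b}.
\]
The remaining instance $(k,\ell)=(1,0)$ of~\eqref{eq:1bisb} then translates into the quasi-periodic boundary condition
\[
  u_n(t,x+a) = e^{i\theta_n}\, u_n(t,x), \qquad \theta_n := 2\pi n \gamma/b.
\]
Substituted into~\eqref{eq:1bis}, using that $W$ depends only on $x$, this yields
\[
  i\partial_t u_n = \bigl(-\partial_x^2 + W(x) + (2\pi n/b)^2\bigr) u_n.
\]
The additive constant is a global phase, so $v_n(t,x) := e^{it(2\pi n/b)^2} u_n(t,x)$ satisfies
\[
  i\partial_t v_n = \bigl(-\partial_x^2 + W(x)\bigr) v_n
\]
with the same boundary phase $\theta_n$ and $|v_n|=|u_n|$. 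By Parseval, and using $\omega = \omega_x \times [0,b]$,
\[
  \|u(t,\bullet)\|^2_{L^2(R)} = b\sum_n \|v_n(t,\bullet)\|^2_{L^2([0,a])},
  \qquad
  \|u(t,\bullet)\|^2_{L^2(\omega)} = b\sum_n \|v_n(t,\bullet)\|^2_{L^2(\omega_x)}.
\]

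\emph{Step 2 (reduction to uniform 1D observability).} It therefore suffices to prove that for every $T>0$ and every non-empty open $\omega_x \subset [0,a]$ there exists $K_0=K_0(T,\omega_x,W)$, independent of $\theta \in [0,2\pi)$, such that every solution $v$ of $i\partial_t v = (-\partial_x^2 + W)v$ on $[0,a]$ with quasi-periodic boundary condition of phase $\theta$ satisfies
\[
  \|v(0)\|^2_{L^2([0,a])} \le K_0 \int_0^T \|v(t)\|^2_{L^2(\omega_x)}\,dt.
\]
Applying this to each $v_n$ with $\theta=\theta_n$ and summing in $n$ gives~\eqref{eq:1bis1}.

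The core difficulty lies in the uniformity in $\theta$ in the one-dimensional estimate above. For each fixed $\theta$, the self-adjoint operator $P_\theta := -\partial_x^2 + W$ on $L^2([0,a])$ has discrete spectrum $\{\lambda_j(\theta)\}$ with Weyl asymptotics $\lambda_j(\theta) \sim (\pi j/a)^2$ and uniform spectral gaps at infinity, while its eigenfunctions vanish on no non-empty open set by one-dimensional unique continuation. An Ingham-type inequality then yields observability for any $T>0$ at fixed $\theta$. Continuity of the spectral data together with compactness of the parameter $\theta \in \SP^1$ upgrades this to a $\theta$-uniform constant $K_0$. An equivalent route, closer to the ``propagation result in dimension one'' mentioned after~\eqref{eq:cont}, is to prove the 1D estimate directly by a semiclassical or Carleman-type argument, tracking constants to verify that they are insensitive to the boundary phase.
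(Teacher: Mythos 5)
Your Steps 1--2 are exactly the paper's reduction: Fourier expand in $y$, peel off the $(2\pi n/b)^2$ phase, and arrive at the one-dimensional Floquet observability estimate, which must be proved with a constant independent of the Floquet phase. This is precisely the content of Lemma~\ref{lem.2.3}, and the paper's proof of Proposition~\ref{p:1bis} consists of nothing more than this reduction plus an appeal to that lemma.

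Where you and the paper genuinely diverge is in the proof of the uniform one-dimensional estimate, and here your primary route has a gap. Your Ingham-plus-compactness sketch glosses over the fact that the Floquet eigenvalues of $-\partial_x^2+W$ on $[0,a]$ with phase $\theta$ come in nearly degenerate pairs $\lambda_k(\theta)\approx((2\pi k+\theta)/a)^2$ and $\lambda_{-k}(\theta)\approx((2\pi k-\theta)/a)^2$, whose separation is $O(k\theta)/a^2$ and which actually coalesce in the periodic ($\theta=0$) and anti-periodic ($\theta=\pi$) cases. So the spectral gap is \emph{not} uniform in $\theta$, the naive Ingham constant blows up as $\theta\to 0,\pi$, and ``continuity of the spectral data together with compactness of $\theta\in\SP^1$'' is not by itself enough: one must either (i) regroup the spectrum into $\theta$-uniformly separated \emph{clusters} of bounded cardinality and use a Haraux--Kahane refinement of Ingham, together with a quantitative non-degeneracy bound $\int_{\omega_x}|a_+e_++a_-e_-|^2\gtrsim|a_+|^2+|a_-|^2$ inside each cluster, or (ii) argue by contradiction via a compactness/defect-measure argument. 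The latter is exactly what the paper does: it gauges away the phase ($w=e^{-2\pi i\alpha x/a}v$ turns the Floquet condition into periodicity at the cost of shifting $D_x\mapsto D_x+\beta$), then runs a semiclassical defect-measure argument (spelled out in the appendix, in parallel with \S\ref{se}--\S\ref{fse}), which delivers the $\beta$-uniformity automatically because the invariance-of-support step is insensitive to the lower-order term $\beta$. Your closing sentence correctly identifies this semiclassical alternative as ``closer to the propagation result in dimension one'' --- that alternative is in fact the paper's proof. The Ingham route can be made to work, but only after the cluster analysis above, which is a real missing step and not a continuity triviality.
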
 

\medskip
\begin{rem}
Unitarity of the propagator $ \exp (  - i t ( -
\Delta + W ) ) $ shows that the $ ( 0 , T ) $ range integration on the right hand side
of \eqref{eq:1bis} can be replaced by $ ( T' , T ) $ for any $  0 \leq T'
< T $.  Same statement is true in the case of \eqref{eq:t2}.
\end{rem}

\medskip

\begin{proof}
As in the proof of Proposition \ref{p:1} we reduce the estimate 
to an estimate in one dimension.

To do that we see that \eqref{eq:1bisb} implies that $ u $ is periodic
in $ y $ and hence can be expanded into a Fourier series:
\[  u ( t , x, y ) =  \sum_{ n \in \ZZ } e^{ - i t n^2 } u_n ( t , x )
e^{ 2 \pi i y / b } \,, \ \ u_n ( t, x ) := \frac 1 b \int_0^b u(t, x
, y ) e^{ - 2 \pi i y / b } dy \,. \]
The condition \eqref{eq:1bisb} now means that
\begin{gather*} u_n ( t , x + a ) = e^{ 2 \pi i \gamma n / b} u_n ( t, x ) = e^{
  2\pi i \gamma_n } u_n ( t, x ) \,,  \\  \gamma_n = \gamma n / b -
[\gamma n / b ] \,, \ \ 0 \leq \gamma_n < 1 \,,  
\end{gather*}
that is, the periodicity in $ x $ is replaced by a Floquet 
periodicity condition. 

Proposition~\ref{p:1bis}  then follows from 
Lemma~\ref{lem.2.3} below.
\end{proof}

\begin{lem} \label{lem.2.3}
Let $ \omega_x \subset [ 0 , a ] $ be any open set.
Suppose that $ v $ solves 
$$ (i \partial_t - D_x^2 - W ( x ) ) v = 0  \,, \ \ W ( x + a ) = W (
x ) \,$$
and for some $ \alpha$, $ 0 \leq \alpha < 1 $, $ v$ satisfies
a Floquet periodicity condition, 
\[  v ( t, x + a ) = e^{ 2 \pi i \alpha } v ( x ) \,. \]
Then for any $ T $ there exists $ C$, independent of $ \alpha $, such
that 
\begin{equation} 
\label{eq:circ} \| v ( 0 , \bullet ) \|_{L^2 ( [ 0 , a ] ) }^2 \leq C \int_0^T \| v
( t, \bullet) \|^2_{L^2 ( \omega_x ) } dt \,. 
\end{equation}
\end{lem}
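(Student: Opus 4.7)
My plan is to use spectral decomposition combined with an Ingham-type inequality for non-harmonic Fourier series; the main work is then to prove a uniform lower bound on the $L^2$-mass of eigenfunctions on $\omega_x$. Let $P_\alpha$ denote the self-adjoint realization of $D_x^2 + W$ on $L^2([0,a])$ with Floquet boundary conditions $u(a) = e^{2\pi i \alpha} u(0)$, $u'(a) = e^{2\pi i \alpha} u'(0)$. It admits an orthonormal eigenbasis $\{\varphi_k^\alpha\}_{k\geq 0}$ with eigenvalues $\mu_k(\alpha)$ satisfying Weyl-type asymptotics $\mu_k(\alpha) = (k\pi/a)^2 + O(1)$ uniformly in $\alpha\in[0,1)$; in particular, gaps between distinct eigenvalues tend to infinity uniformly in $\alpha$, and the eigenspaces have dimension at most two. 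Expanding $v(0,\cdot) = \sum_k c_k \varphi_k^\alpha$ yields $v(t,x) = \sum_k c_k e^{-it\mu_k(\alpha)}\varphi_k^\alpha(x)$ and $\|v(0,\cdot)\|^2_{L^2([0,a])} = \sum_k |c_k|^2$.

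For each fixed $x\in\omega_x$ I would apply the Kahane--Haraux generalization of Ingham's inequality to the scalar non-harmonic series $t\mapsto \sum_k c_k \varphi_k^\alpha(x) e^{-it\mu_k(\alpha)}$ (grouping terms with equal eigenvalue into a single coefficient). The uniform growth of the spectral gaps yields, for any $T > 0$, a constant $C_T > 0$ independent of $\alpha$ such that
\[ \int_0^T |v(t,x)|^2\,dt \;\geq\; C_T \sum_k |c_k|^2 |\varphi_k^\alpha(x)|^2 \,. \]
Integrating in $x$ over $\omega_x$ reduces \eqref{eq:circ} to a uniform observability statement for eigenfunctions: $\|\psi\|^2_{L^2(\omega_x)} \geq c_0 \|\psi\|^2_{L^2([0,a])}$ for every $\psi$ in any eigenspace of any $P_\alpha$, with $c_0 > 0$ independent of the eigenvalue and of $\alpha$.

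This last bound is the main obstacle, and I would prove it by contradiction. Suppose $\varphi_n$ is a normalized sequence of eigenfunctions of $P_{\alpha_n}$ (of eigenvalue $\lambda_n$) with $\|\varphi_n\|_{L^2(\omega_x)} \to 0$. If $\lambda_n$ stays bounded, a subsequence converges in $H^1$ to a nontrivial eigenfunction of some $P_{\alpha_\infty}$ vanishing on the open set $\omega_x$, contradicting classical uniqueness for the second-order ODE $(D_x^2 + W - \lambda_\infty)\varphi_\infty = 0$. If $\lambda_n\to\infty$, I would pass to a semiclassical defect measure $\mu$ on $T^*(\R/a\Z)$ at scale $h_n = 1/\sqrt{\lambda_n}$; this is a probability measure supported on $\{\xi^2 = 1\}$ and invariant under the Hamiltonian flow of the limiting principal symbol $\xi^2$. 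Since every orbit of that flow coincides with one of the two circles $\{\xi = \pm 1\} \times (\R/a\Z)$, the projection of $\mu$ to $x$ is a constant multiple of Lebesgue measure, forcing $\mu(\omega_x \times \R) = |\omega_x|/a > 0$ and contradicting $\|\varphi_n\|_{L^2(\omega_x)} \to 0$. Uniformity in the Floquet parameter $\alpha \in \R/\Z$ presents no essential difficulty: $\alpha$ varies in a compact set and affects only the boundary condition, not the semiclassical symbol, so both the compactness argument for bounded eigenvalues and the defect measure analysis at high frequency pass through uniformly.
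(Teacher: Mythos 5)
Your strategy — eigenfunction expansion, Ingham/Kahane in time, then a uniform lower bound for eigenfunction mass on $\omega_x$ — is a genuinely different route from the paper, which never decomposes in eigenfunctions: it conjugates $v$ by $e^{-2\pi i\alpha x/a}$ to pass to a periodic function solving $(i\partial_t-(D_x+\beta)^2-W)w=0$, and then runs the time-dependent compactness/semiclassical-defect-measure argument (the one-dimensional analogue of Propositions~\ref{th:3} and \ref{prop.4.1}), obtaining the estimate with an $H^{-2}$ remainder and removing it by the Bardos--Lebeau--Rauch uniqueness--compactness step. Both ultimately invoke defect measures at high frequency and unique continuation at low frequency, but your reduction to \emph{stationary} eigenfunction observability is not what the paper does.

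There is, however, a genuine gap in your Ingham step: the spectral gaps of $P_\alpha$ are \emph{not} uniform in $\alpha$. Already for $W=0$ the eigenvalues are $(2\pi(k+\alpha)/a)^2$, $k\in\ZZ$, and as $\alpha\to 0$ (or $\alpha\to 1/2$) the pairs $\lambda_k,\lambda_{-k}$ (resp.\ $\lambda_k,\lambda_{-(k+1)}$) approach each other: the gap $\sim k\alpha$ can be made $O(1)$ by taking $k\sim 1/\alpha$, and the threshold scale $k\sim 1/\alpha$ escapes to infinity as $\alpha\to 0$. Hence ``grouping terms with equal eigenvalue'' is not enough: you must cluster \emph{nearby but distinct} eigenvalues, and the Kahane inequality then only gives
$\int_0^T|v(t,x)|^2\,dt\gtrsim \sum_j\bigl|\sum_{k\in C_j}c_k\varphi_k^\alpha(x)\bigr|^2$
for $2$-element clusters $C_j$, not $\sum_k|c_k|^2|\varphi_k^\alpha(x)|^2$. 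Consequently the observability bound you must establish is for arbitrary linear combinations within a cluster — i.e.\ for $O(1)$-width spectral quasimodes — and not merely for individual eigenfunctions (nor even eigenspaces). As written, your contradiction argument produces a normalized eigenfunction vanishing on $\omega_x$; it does not directly produce the quasimode statement you actually need. This is fixable: the same semiclassical-defect-measure argument applies to sequences with $\|(h_n^2 P_{\alpha_n}-1)g_n\|_{L^2}=O(h_n^2)$, and at bounded frequency one can differentiate out the components of a vanishing combination $c_1\varphi_1+c_2\varphi_2$ by applying $(D_x^2+W-\mu_1)$. But these steps are absent, and the claim ``gaps tend to infinity uniformly in $\alpha$'' on which your Ingham estimate rests is false. (A secondary, minor point: to define the defect measure on $T^*(\R/a\Z)$ you should first conjugate the Floquet functions to periodic ones, as the paper does; otherwise the measure lives on the real line.)
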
 
\begin{proof}
We use the semi-classical approach developed by
Lebeau~\cite[Theorem 3.1]{Le} though 
the situation is
simpler here as we are dealing with internal controls in dimension
$1$. 

Writing 
$  w ( x ) := e^{ - 2 \pi i \alpha x / a } v  (x ) $, 
we obtain a periodic function $ w $ satisisfying
\begin{equation}
\label{eq.redu}  ( i \partial_t -  ( D_x + \beta )^2 - W ( x ) ) w = 0 \,, \ \
\beta := \frac{ 2 \pi \alpha} a \,. 
\end{equation}
The argument from \cite{Le} (used in \S\S \ref{se},\ref{fse}, 
below 
-- see Remark~\ref{rem.leb}) applies and shows uniformity in 
$ \alpha $. 
For reader's convenience we provide more details in the appendix.
\end{proof}

Next we present a slight variation of the well known normal form result  -- see \cite{W}
where it was used in the case of Zoll manifolds (of which the circle is 
a trivial example). Our version can also be seen as a special case of the normal form in~\cite{De}

We start by introducing some notation:
we  have the spaces of standard pseudodifferential operators $\Psi^{m } ( \TT )$, $\Psi^{m } ( \TT^2 ) $ while 
\begin{equation}
\label{eq:cps}
  \CI \otimes \Psi^{m} := \CI ( \TT_x^1 ) \otimes \Psi^m ( \TT_y )
\,, \end{equation}
denotes semiclassical pseudodifferential  operators (of order m) in $ y $ depending smoothly 
on $ x $ as a parameter.

To makes things transparent we first present normal form results
for tori.
\begin{prop}
\label{p:2} 
Let $\chi \in \CIc ( \mathbb{R}^2) $ be equal to $ 0 $ in a
neighbourhood of $ \eta = 0 $. Suppose 
that $ V ( x, y ) \in \CI ( \TT^1 \times \TT^1 ) $. Then there
exist operators
\[   Q( x, y , hD_y ) \in \CI \otimes \Psi^{0} \,, \ \ R ( x, y ,
hD_x, hD_y  )
\in \Psi^{0 } ( \TT^2 )  \,, \]
such that 
\begin{equation}
\label{eq.normale}
\begin{split}
& ({\Id} + h Q) \left(D^2_y + V ( x, y ) \right)\chi( hD_x,  hD_y)\\
& \ \ \ \ \ \ \ \ = (D_y^2 + V_0 ( x)) ({\Id} + hQ) \chi( hD_x, hD_y) + hR 
  \,, \end{split}
\end{equation}
where 
\[  V_0 ( x) = \frac 1 { 2 \pi} \int_{\TT^1} V ( x , y ) dy
\,.\]
\end{prop}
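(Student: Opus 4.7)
The approach is Moser averaging carried out fiberwise via Fourier series in $y$. Expand the zero-mean part of the potential as
\[
V(x,y) - V_0(x) = \sum_{k\neq 0} v_k(x)\,e^{iky},
\]
where smoothness of $V$ guarantees that $v_k(x)$ is smooth in $x$ and decays faster than any power of $|k|$. I look for $Q$ of the parallel form
\[
Q = \sum_{k\neq 0} e^{iky}\, q_k(x, hD_y),
\]
with $q_k(x,\eta)$ chosen so that $h[D_y^2,Q]$ cancels $V-V_0$ to leading order on the microsupport of $\chi$.

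The mechanism is the identity $[D_y^2, e^{iky}] = e^{iky}(2kD_y+k^2)$, which gives
\[
h[D_y^2,Q] = \sum_{k\neq 0} e^{iky}\bigl(2k\cdot hD_y + hk^2\bigr)\,q_k(x,hD_y).
\]
On $\supp\chi$ the variable $\eta = hD_y$ is bounded below in absolute value, so $2k\eta$ is invertible there. Fix $\psi\in\CIc(\RR)$ equal to $1$ on the $\eta$-projection of $\supp\chi$ and vanishing near $\eta=0$, and set
\[
q_k(x,\eta) = \frac{v_k(x)\,\psi(\eta)}{2k\eta}.
\]
The rapid decay of $v_k$ together with the single factor $k$ in the denominator makes the series converge in $\CI\otimes\Psi^0$. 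A direct substitution, using $\psi\equiv 1$ on $\supp\chi$, then yields
\[
h[D_y^2,Q]\,\chi = (V-V_0)\,\chi + hR_1\,\chi,\qquad R_1 = -i(\partial_y V)\,\phi(hD_y),\ \phi(\eta) := \frac{\psi(\eta)}{2\eta}\in\CIc(\RR),
\]
so $R_1\in\Psi^0(\TT^2)$ (the factor $\sum_{k\neq 0} k v_k(x)e^{iky}$ reassembles to $-i\partial_y V$).

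Expanding both sides of \eqref{eq.normale} and cancelling the leading terms gives
\[
({\Id}+hQ)(D_y^2+V)\chi - (D_y^2+V_0)({\Id}+hQ)\chi = (V-V_0)\chi - h[D_y^2,Q]\chi + h(QV - V_0 Q)\chi,
\]
which by the identity above equals $h\bigl(-R_1 + QV - V_0 Q\bigr)\chi$. Hence one takes $R = (-R_1 + QV - V_0 Q)\chi\in\Psi^0(\TT^2)$, using that multiplication by $V(x,y)$ or $V_0(x)$ preserves the semiclassical class of order $0$ on $\TT^2$ and composes with $Q\in\CI\otimes\Psi^0$ inside $\Psi^0(\TT^2)$. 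The main obstacle is purely bookkeeping: verifying that the $k$-sum converges in $\CI\otimes\Psi^0$ and that each remainder piece genuinely lies in $\Psi^0(\TT^2)$. Both rest on rapid decay of the Fourier coefficients of the smooth potential $V$ together with the cutoff $\psi$ removing the apparent singularity of $1/\eta$; once the ansatz for $Q$ is in place, no genuine analytic difficulty arises.
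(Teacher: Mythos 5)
Your proof is correct and follows essentially the same Moser-averaging route as the paper: you reduce to the cohomological equation $-\tfrac{2}{i}\eta\,\partial_y q + (V - V_0) = 0$ on $\supp\chi$ and solve it with a cutoff removing the $\eta = 0$ singularity, the only difference being that you produce $q$ via its Fourier series $\sum_{k\neq0} e^{iky} v_k(x)\psi(\eta)/(2k\eta)$ whereas the paper writes $q = \frac{i\zeta(\eta)}{2\eta}\int_0^y(V - V_0)\,dy'$ (the two differ by a $y$-independent term that commutes with $D_y^2$ and is therefore irrelevant). A minor bonus of your version is that the commutator computation $[D_y^2, e^{iky}q_k(x,hD_y)] = e^{iky}(2kD_y + k^2)q_k(x,hD_y)$ is an exact operator identity, so the remainder $R$ is exhibited explicitly rather than being swept into a pseudodifferential-calculus $O(h)$ term.
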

\begin{proof}
Indeed, we have 
\begin{multline}
\label{eq:2}
(\Id +h Q) \left(D^2_y + V ( x, y ) \right) \chi( h D_x, hD_y) -
(D_y^2 + V_0 ( x)) ( \text{ Id} + hQ) \chi( h D_x , hD_y) \\
= \Bigl( h[Q, D_y^2] + V(x,y)- V_0(x) + h R_1  \Bigr) \chi( hD_y)  \end{multline}
 with $R_1 \in \CI \otimes \Psi^{0 } $.
The pseudodifferental calculus shows that to obtain
\eqref{eq.normale}, 
it is enough to find $q \in \CIc( \T^2\times \mathbb{R}) $ such that 
\begin{equation}\label{eq.norm}
 \Bigl(-\frac 2 i \eta \,  \partial_y q (x, y, \eta) + V(x,y) - V_0(x)
 \Bigr) \chi(  \xi, \eta) = 0
 \end{equation}
 Since $\chi$ vanishes near $\eta=0$, we can find $\zeta \in \CIc (
 \mathbb{R} \setminus \{0\})$ equal to $1$ on
the support of $\chi$, and we can solve~\eqref{eq.norm} by taking
 \begin{equation}\label{eq.norm2}
 q(x, y, \eta) = \frac{ i \zeta (\eta)} { 2 \eta} \int_0^y (V(x,y') - V_0(x)) dy'
 \end{equation} 
 We notice that by construction, $V_0(x) - V(x,y) $ has $y$-mean equal to $0$ and consequently the function $q$ defined in~\eqref{eq.norm2} is periodic.
 \end{proof}
\begin{cor}\label{cor.1}
There exists operators 
\[ W =  W  ( x , y , h D_x , h D_y ) \in \Psi^0 (
\TT^2 ) \,, \ \  R = R ( x, y , h D_x , hD_y ) \in \Psi^{0 } ( \TT^2 )
\,,  \]
 such that
\begin{equation}
\label{eq:22}
\begin{split} 
& (\Id +h Q) \left(  D_x ^2 + D^2_y + V ( x, y ) \right)\chi( hD_x, hD_y)\\
& \ \ \ = \Bigl(\bigl(D_x^2 + D_y^2 + V_0 ( x)\bigr) ( \Id+ hQ) +
W \Bigr) \chi(h D_x,  hD_y)  + h R \,,\\
& \ \ \ \ \ \ \ \ \ \ \ \ \ \ \ \ \ \ \ \ \ \ W ( x, y , 0 , \eta ) \equiv 0 \,. 
\end{split}
\end{equation}
\end{cor}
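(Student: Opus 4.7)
The plan is to reduce the corollary to Proposition~\ref{p:2} by incorporating the extra $D_x^2$ term via a commutator. Starting from \eqref{eq.normale}, I would add $(\Id + hQ)D_x^2\chi(hD_x,hD_y)$ to both sides. Since $D_x^2$ commutes with $\chi(hD_x, hD_y)$ trivially, on the right one has
\[
(\Id + hQ)D_x^2\chi(hD_x,hD_y) = D_x^2(\Id + hQ)\chi(hD_x,hD_y) + h[Q, D_x^2]\chi(hD_x,hD_y),
\]
which, combined with \eqref{eq.normale}, assembles into
\[
(\Id + hQ)(D_x^2 + D_y^2 + V)\chi = (D_x^2 + D_y^2 + V_0)(\Id + hQ)\chi + h[Q, D_x^2]\chi + hR.
\]
So it remains to identify $h[Q, D_x^2]\chi$ as $W\chi$ modulo $h\Psi^0(\TT^2)$, with $W \in \Psi^0(\TT^2)$ vanishing at $\xi = 0$.

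Next I would compute the commutator directly. Since $Q = q(x,y,hD_y)$ is a semiclassical operator in $y$ whose symbol depends on $x$ only as a parameter, and since $D_x$ commutes with both $hD_y$ and with multiplication by functions of $y$, the only contribution to $[D_x, Q]$ is the $x$-derivative of the symbol, giving
\[
[D_x^2, Q] = 2(D_x q)(x,y,hD_y)D_x + (D_x^2 q)(x,y,hD_y).
\]
Multiplying by $\chi$ and pairing the scalar factor $h$ with $D_x$ then yields
\[
h[Q, D_x^2]\chi = -2(D_x q)(x,y,hD_y)(hD_x)\chi(hD_x,hD_y) - h(D_x^2 q)(x,y,hD_y)\chi(hD_x,hD_y).
\]

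Setting $W := -2(D_x q)(x,y,hD_y)(hD_x)$ produces an element of $\Psi^0(\TT^2)$ with principal symbol $-2\xi \cdot (D_x q)(x,y,\eta)$, which manifestly vanishes at $\xi = 0$ as required. The remaining summand $-h(D_x^2 q)(x,y,hD_y)\chi$ is $h$ times an operator in $\Psi^0(\TT^2)$ and is absorbed into the $hR$ remainder. The one point that deserves a brief check (and which I regard as the only mild obstacle) is that $W$ is genuinely a semiclassical pseudodifferential operator on $\TT^2$ rather than a formal composition: by the explicit formula \eqref{eq.norm2} the symbol $(D_x q)(x,y,\eta)$ is smooth, periodic in $(x,y)$, and compactly supported in $\eta$ away from $0$ (via the cutoff $\zeta$), so the composition with $hD_x$ is a bona fide element of $\Psi^0(\TT^2)$ and the semiclassical calculus applies without difficulty.
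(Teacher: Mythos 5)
Your computation and the overall structure of the argument are the same as the paper's, but there is a concrete error in the final step. You set $W := -2(D_x q)(x,y,hD_y)(hD_x)$ and claim this is an element of $\Psi^0(\TT^2)$. Its symbol, to leading order, is $-2\xi\,(D_x q)(x,y,\eta)$, which is compactly supported in $\eta$ (via the cutoff $\zeta$ in \eqref{eq.norm2}) but grows \emph{linearly} in $\xi$. Compact support in $\eta$ imposes no constraint whatsoever on the behaviour in the conjugate variable $\xi$, so the justification that ``the composition with $hD_x$ is a bona fide element of $\Psi^0(\TT^2)$'' confuses the two dual variables. As written, $W\in\Psi^1(\TT^2)$, not $\Psi^0(\TT^2)$, and so it does not meet the requirements of the corollary's statement.

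The fix is exactly what the paper does: pick $\widetilde\chi\in\CIc(\RR^2)$ equal to $1$ on $\supp\chi$ and set
\[
W := -2\,(D_x q)(x,y,hD_y)\,(hD_x)\,\widetilde\chi(hD_x,hD_y)\,.
\]
Since $\widetilde\chi\,\chi=\chi$ as Fourier multipliers, $W\chi(hD_x,hD_y)$ is unchanged, so the identity \eqref{eq:22} still holds with the same remainder $hR$; but now the symbol of $W$ is compactly supported in $(\xi,\eta)$, hence $W\in\Psi^0(\TT^2)$, and the vanishing of its symbol at $\xi=0$ is preserved. With that insertion your argument coincides with the paper's proof.
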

\begin{proof}
Indeed, the same calculation as above shows that  by symbolic calculus, we can take 
$$W (x,y, \xi, \eta) = \frac 2 \xi i \partial_x q (x, y, \eta) \widetilde
\chi ( \xi, \eta) \,, $$ 
where $ \widetilde \chi \in \CIc ( \RR^2 ) $ is equal to one on the
support of $ \chi $. 
\end{proof}

\medskip
In the case of irrational tori $ \TT^2 \simeq [0,A] \times [0,A] $, $
A/B \notin \Q $, we need slightly more complicated versions of 
Proposition \ref{p:2} and Corollary \ref{cor.1}. They involve
covering $ \TT^2 $ by a strip. 

Let us consider a  constant rational vector field on the torus given 
by a direction  
\[  \Xi_0 = c  ( n A , m B ) \,,  \ \ n , m \in \ZZ \,, \ \ c 
\in \RR \setminus \{ 0 \} \,. \] 
As shown in Fig.~\ref{f:3} we can find a 
strip bounded in the direction of $ \Xi_0 $ and covering $ \TT^2 $. If 
the torus is itself rational (that is $ A/B \in {\mathbb Q} $ in
\eqref{eq:AB})
we can find a rectangle $ R $ 
with sides parallel to  $ \Xi_0 $ and $ \Xi_0^\perp $
which covers $ \TT^2 $.

\begin{figure}[ht]
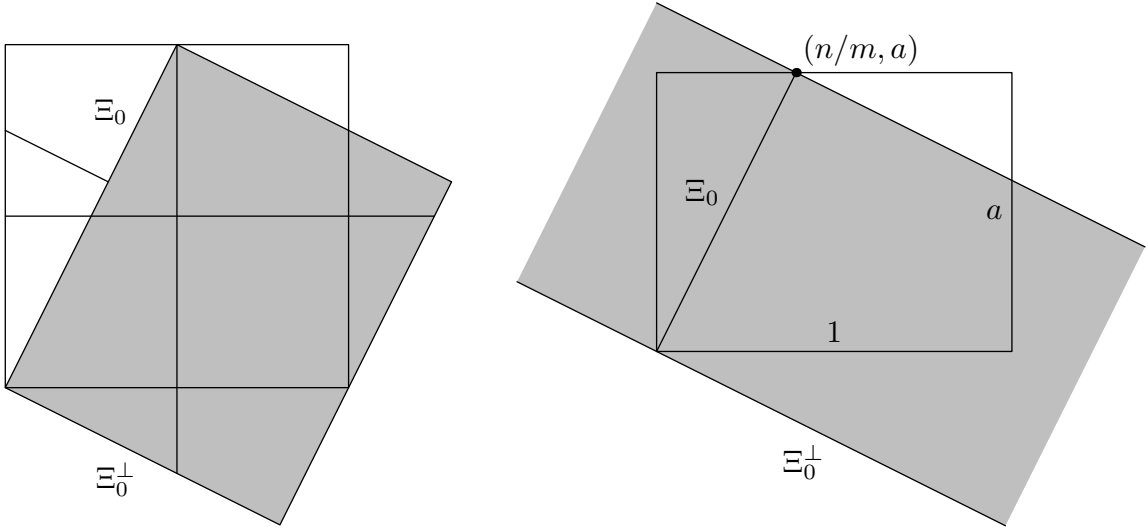

\includegraphics[width=6cm]{m11.mps}\qquad\includegraphics[height=7cm]{m12.mps}
\caption{On the left, a rectangle, $ R $, covering a rational torus $ \TT^2
  $. In that case we obtain a periodic solution on $ R $. On the
  right, the irrational case: the strip with sides $ m \Xi_0
  \times   \RR \Xi_0^\perp $, $ \Xi_0 = ( n/m, a ) $ (not normalized
  to have norm one), 
  also covers the torus $ [ 0 , 1 ] \times [ 0 , a ] $. Periodic
  functions are pulled back to functions satisfying \eqref{eq:per}.}
\label{f:3}
\end{figure} 

Let us normalize $ \Xi_0 $ to have norm one,
\begin{equation}
\label{eq:Xi0}   \Xi_0 = \frac1 { \sqrt { n^2 A^2 + m^2 B ^2 } } ( n A
, m B  )\,,
\ \ \Xi_0^\perp  = \frac1 { \sqrt { n^2 A^2 + m^2 B^2 } } ( - m B ,
n    A  ) \,.\end{equation}
The change of coordinates in  $\RR^2 $, 
\begin{equation}
\label{eq:F}  F \; : \; ( x, y ) \longmapsto  z = F ( x, y )  = x
\Xi_0^\perp  + y \Xi_0 \,, 
\end{equation}
is orthogonal and hence $ - \Delta_z = D_x^2 + D_y^2 $. 

We have the following simple lemma: 
\begin{lem}
\label{l:si}
Suppose that $ \Xi_0 $ and $ F $ are given by \eqref{eq:Xi0} and
\eqref{eq:F}. If 
 $ u = u ( z ) $ is perodic with respect to $ A \ZZ \times B  \ZZ$
then 
\begin{equation}
\label{eq:per}
 F^* u ( x + k a , y + \ell b ) = F^* u ( x , y - k \gamma ) \,, \ \ k,
 \ell \in \ZZ \,,  \ \ ( x, y ) \in \RR^2 \,, 
\end{equation}
where, for any fixed $ p , q \in \ZZ $, 
\[ a =   \frac {( q n - p m ) A B } { \sqrt {  n^2 A^2 + m^2 B ^2 } }    \,,
\ \      b =  \sqrt {  n^2 A^2  + m^2 B^2 }  \,, \ \  \gamma = -
\frac{ pn A^2 + q m B^2  }{  \sqrt { n^2 A^2 + m^2 B^2 } }  \,. \]
When $ B/A = r/s \in \Q $ then 
\[  F^* ( x + k \widetilde a , y + \ell b ) = F^* u ( x, y ) \,, \ \  \ \ k,
 \ell \in \ZZ \,,  \ \ ( x, y ) \in \RR^2 \,, \]
for $ \widetilde a = ( n^2 s^2 + m^2 r^2 )  a  $.
\end{lem}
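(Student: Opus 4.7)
The plan is to exploit that $F$ is an orthogonal linear change of variables of $\RR^2$, so that the pullback $F^*u$ of any $\Lambda$-periodic function $u$ (with $\Lambda := A\ZZ \times B\ZZ$) inherits periodicity with respect to the pulled-back lattice $F^{-1}(\Lambda)$. The task reduces to identifying two convenient generators of $F^{-1}(\Lambda)$, which I will take to be the preimages of the lattice vectors $(nA, mB)$ and $(pA, qB)$. The first is immediate from the normalization \eqref{eq:Xi0}: $(nA, mB) = b\,\Xi_0$, which under \eqref{eq:F} pulls back to $(0, b)$, yielding $F^*u(x, y + \ell b) = F^*u(x, y)$ for every integer $\ell$.

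For the second generator, I decompose $(pA, qB) \in \Lambda$ in the orthonormal frame $\{\Xi_0^\perp, \Xi_0\}$:
\begin{equation*}
(pA, qB) \;=\; \langle (pA, qB), \Xi_0^\perp \rangle \, \Xi_0^\perp + \langle (pA, qB), \Xi_0 \rangle \, \Xi_0.
\end{equation*}
A direct computation using \eqref{eq:Xi0} identifies the first inner product as $a$ and the second as $-\gamma$, so $(pA, qB) = a\,\Xi_0^\perp - \gamma\,\Xi_0 = F(a, -\gamma)$, i.e.\ $(a, -\gamma) \in F^{-1}(\Lambda)$. Periodicity of $u$ then gives $F^*u(x + k a, y - k \gamma) = F^*u(x, y)$ for every $k \in \ZZ$; combined with the $b$-periodicity in $y$ from the previous step, this yields the first assertion of the lemma.

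For the rational case $B/A = r/s \in \Q$, substituting $B = rA/s$ into \eqref{eq:Xi0} gives $N^2 = A^2(n^2 s^2 + m^2 r^2)/s^2$, and hence
\begin{equation*}
\frac{\gamma}{b} \;=\; -\frac{p n s^2 + q m r^2}{n^2 s^2 + m^2 r^2}.
\end{equation*}
The choice $k = n^2 s^2 + m^2 r^2$ therefore makes $k\gamma$ an integer multiple of $b$ for every $p, q$, so iterating the periodicity from the first part $k$ times eliminates the residual $y$-shift and leaves $F^*u$ strictly periodic in $x$ with period $\widetilde a = k a = (n^2 s^2 + m^2 r^2) a$, which is the stated formula. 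None of the steps presents a genuine obstacle: the only calculations are the two inner products and the rational substitution for $N^2$, and the only care needed is in sign bookkeeping for the orthogonal frame.
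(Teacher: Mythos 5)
Your proof is correct and follows essentially the same route as the paper: you identify $(0,b)$ and a vector of the form $(a,\pm\gamma)$ as generators of the pulled-back lattice $F^{-1}(A\ZZ\times B\ZZ)$, with the components computed by taking inner products of $(pA,qB)$ against the orthonormal frame $\{\Xi_0^\perp,\Xi_0\}$ exactly as the paper does, and in the rational case you use the identical observation that $(n^2s^2+m^2r^2)\gamma$ is an integer multiple of $b$. The only wrinkle is sign bookkeeping for $\gamma$ — the paper's own stated formula for $\gamma$ carries a minus sign that does not quite match the inner product appearing in its proof, and your careful identification of $\langle(pA,qB),\Xi_0\rangle$ with $-\gamma$ tracks the stated formula correctly but then the final restatement of the Floquet relation inherits the same sign ambiguity; this is a typo-level issue that affects neither your argument nor the paper's use of the lemma.
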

\begin{proof}
The proof is a calculation: we need to find $ a, b$, and $ \gamma $ so
that for any $ k , \ell \in \ZZ$ there exist $ P, Q \in \ZZ $ so that 
\[  k  a \Xi_0^\perp + ( \ell b + k \gamma ) \Xi_0 =  P A (1, 0 ) + Q
B ( 0 ,  1)  \,. \]
Taking $  b =  \sqrt { n^2 A^2 + m^2 B^2 } $
we only need to check  
that this relation holds with $ k = 1 $ and $ \ell = 0 $:
\[   a \Xi_0^\perp +  \gamma  \Xi_0 =  p A (1, 0 ) + q
B ( 0 ,  1)  \,, \]
which can be solved for $ a $ and $ \gamma $ for any $ p $ and $ q$.
By taking inner products with $ \Xi_0 $, $ \Xi_0^\perp $ we obtain 
formul{\ae} for $ a $ and $ \gamma $. 

When $ B/A = r/s$, $ r, s \in \ZZ \setminus \{ 0 \} $ we need to find 
$ M \in \ZZ \setminus \{ 0 \} $ so that $ M \gamma = K b $ for some $
K \in \ZZ $. We check that $ M = n^2 s^2 + r^2 m^2 $ works and hence
we obtain periodicity.
\end{proof} 

\medskip
\begin{rem}
Condition \eqref{eq:per} for $ w = F^*u $ is in fact equivalent to 
periodicity of $ u $ with respect to 
\[ \ZZ\,  \vec v_1  \, \oplus \, \ZZ \, \vec v _2  \,, \ \ \  \vec v_1 = ( n A , m B ) \,,
\ \vec v_2 := ( p A , q B ) \,. \]
 That periodicity is of course implied by periodicity 
with respect to $ A \ZZ \times B \ZZ$.
\end{rem}
\medskip

\begin{rem}
A natural choice of $ p $ and $ q $ which excludes the degenerate
cases $ p = q = 0 $ and $ p = n $, $ q = m $, can be obtained by 
assuming (without loss of generality) that $ n $ and $ m $ are
relatively prime and then taking $ p $ and $ q $ satisfying 
\[  n q - m p = 1 \,, \]
which is possible by Bezout's theorem. This will be the choice
we make in what follows.
\end{rem}

\medskip

We can now give a generalized version of Proposition \ref{p:2}:
\begin{prop}
\label{p:2b} 
Suppose that $ F : \RR^2 \rightarrow \RR^2 $ is given by
\eqref{eq:F}  and that $ V \in \CI ( \RR^2 ) $ is periodic
with respect to $ A \ZZ \times B \ZZ $. Let $ a , b $ and $ 
\gamma $ be as in \eqref{eq:per}. 

Let  $\chi \in \CIc ( \mathbb{R}^2 ) $ 
is equal to $0$ near the set $\eta =0$.
There exist operators
\[   Q( x, y , hD_y ) \in \CI ( \RR ) \otimes \Psi^0 ( \RR )  \,, \ \
R ( x, y ,hD_y , h D_x )
\in  \Psi^{0 } ( \RR^2  )  \,, \]
such that 
$  (F^{-1})^*  Q F^* $ and $  (F^{-1})^*  R F^* $ preserve 
$ A \ZZ \times B \ZZ $ periodicity, and
\begin{equation}
\label{eq.normale}
\begin{split}
& ({\Id} + h Q) \left(D^2_y + F^* V ( x, y ) \right)\chi( h D_x, hD_y)\\
& \ \ \ \ \ \ \ \ = (D_y^2 + V_0 ( x)) ({\Id} + hQ) \chi( h D_x,  hD_y) + hR 
  \,, \end{split}
\end{equation}
where 
\[  V_0 ( x) := \frac 1 { b } \int_0^b F^* V ( x , y ) dy 
\,,\]
satisfies $ V_0 ( x + k a ) = V_0 ( x ) $, $ k \in \ZZ $. 
\end{prop}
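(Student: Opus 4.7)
The plan is to follow the construction of Proposition \ref{p:2} essentially verbatim, with the only new difficulty being that the pullback $F^*V$ is no longer $a\ZZ \times b\ZZ$-periodic in $(x,y)$ separately but only invariant under the skew lattice $L = \ZZ\cdot(a,\gamma) + \ZZ\cdot(0,b)$ appearing in \eqref{eq:per}. First I would observe that taking $k=0$ in \eqref{eq:per} shows $F^*V(x, y+b) = F^*V(x, y)$, so $V_0(x)$ is well defined; a change of variable together with the Floquet identity $F^*V(x+a, y) = F^*V(x, y-\gamma)$ and $b$-periodicity in $y$ then gives $V_0(x+a) = V_0(x)$.

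For the normal form I would look for a symbol solving $-(2\eta/i)\partial_y q = (F^*V - V_0)\zeta(\eta)$, exactly the equation from Proposition \ref{p:2}. The naive antiderivative
\[
q_0(x, y, \eta) := \frac{i\zeta(\eta)}{2\eta}\int_0^y(F^*V(x, y') - V_0(x))dy'
\]
is $b$-periodic in $y$ thanks to the vanishing $y$-mean of the integrand, but a direct computation shows
\[
q_0(x+a, y+\gamma, \eta) - q_0(x, y, \eta) = \frac{i\zeta(\eta)}{2\eta}\int_{-\gamma}^{0}(F^*V(x, y') - V_0(x))dy',
\]
a nonzero function of $x$ alone. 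Since $(F^{-1})^*QF^*$ preserves $A\ZZ \times B\ZZ$-periodicity iff the symbol of $Q$ descends to the torus $\RR^2/L$, i.e. iff $q(x+a, y+\gamma, \eta) = q(x, y, \eta)$, the naive choice must be corrected. The fix is to subtract the $y$-mean of the primitive: set $q(x, y, \eta) := q_0(x, y, \eta) - \frac{i\zeta(\eta)}{2\eta}\bar G(x)$ with $\bar G(x) := b^{-1}\int_0^b\int_0^y (F^*V(x, y') - V_0(x))dy' dy$. A short calculation using $b$-periodicity in $y$ shows $\bar G(x+a) - \bar G(x)$ equals the monodromy term above, so the adjusted $q$ satisfies the Floquet condition. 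A cleaner way to see the same thing is to expand $F^*V - V_0 = \sum_{k \neq 0} a_k(x) e^{2\pi iky/b}$: the Floquet relation on $F^*V$ forces $a_k(x+a) = e^{-2\pi ik\gamma/b} a_k(x)$, making the formal antiderivative $\sum_{k \neq 0}\frac{b a_k(x)}{2\pi ik} e^{2\pi iky/b}$ manifestly Floquet by inspection. In either form, $q$ is a smooth bounded function on $\RR^2/L$.

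Once $q$ is in hand I would quantize in $y$ to form $Q = q(x, y, hD_y) \in \CI(\RR)\otimes \Psi^0(\RR)$; the Floquet symmetry of $q$ gives that $(F^{-1})^*QF^*$ preserves $A\ZZ\times B\ZZ$-periodicity. The normal form identity \eqref{eq.normale} then follows from the same symbolic calculation as in Proposition \ref{p:2}: via the Poisson bracket $\{q, \eta^2\} = -2\eta\partial_y q$ one gets $h[Q, D_y^2] = -\frac{2\eta\partial_y q}{i} + O_{\Psi^0}(h)$, and on $\supp \chi$ the leading term cancels the $(F^*V - V_0)\chi$ arising from the difference $(\Id + hQ)(D_y^2 + F^*V)\chi - (D_y^2 + V_0)(\Id + hQ)\chi$. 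Defining $R$ from \eqref{eq.normale}, its symbol is built from Moyal products of the Floquet symbols $q, F^*V$ and the $a$-periodic $V_0$; since the Moyal product is translation-covariant it preserves Floquet invariance, so $(F^{-1})^*RF^*$ also preserves $A\ZZ \times B\ZZ$-periodicity. The main obstacle is precisely the symbol-level Floquet condition on $q$: the naive integral from Proposition \ref{p:2} fails it, and the $y$-mean subtraction (equivalently, the Fourier construction) is the one extra technical ingredient needed to adapt the argument to the irrational/skew setting.
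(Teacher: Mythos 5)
Your proposal is correct and follows essentially the same route as the paper: both identify that the twisted periodicity forces one to correct the naive primitive from Proposition~\ref{p:2} by subtracting its $y$-mean, and your explicit formula for the corrected $q$ agrees with the one given in the paper (which phrases the same construction more conceptually as solving a transport equation along the closed orbits of $\langle\Xi_0,\partial_z\rangle$ on the torus, with the zero-average normalization on each orbit). The Fourier-series variant you sketch is an equivalent repackaging and does not constitute a different argument.
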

\begin{proof}
We proceed as in the proof of Proposition \ref{p:2}. We need to solve
equation \eqref{eq.norm} but now $ q $ has to satisfy the twisted 
periodicity condition \eqref{eq:per}, which will follow if 
$ q_F ( z , \eta ) :=  (F^{-1}) ^*q ( \bullet, \eta ) ( z ) $ is $ A\ZZ \times B \ZZ $
periodic. The equation \eqref{eq:per} is then equivalent to 
\begin{equation}
\label{eq:Xi1} 
n  \eta \langle \Xi_0, \partial_x \rangle q_F ( z , \eta ) = V ( z ) - 
 (F^{-1})^* V_0 ( z ) \,, \end{equation}
on the support of $ \chi ( \xi, \eta ) $.  We note that $ (F^{-1})^*
V_0 $ is the average of $ V $ over the (closed) orbit of $ \langle
\Xi_0, \partial_z \rangle $.  In particular, the average of the 
right hand side is $ 0 $. 

An equation of this form can be 
solved on any compact Riemannian manifold: if $ X $ is a length
one vectorfield with closed integral curves, and $ f $ is 
function integrating (with respect to the length parameter) to $ 0 $
along those curves,
then there exists $ u $, smooth on $ M $, satisfying $ X u = f$. 
To see this we solve the equation on each curve, demanding that
$ u $ integrates to zero on that curve. That determines $ u $ 
uniquely and hence provides a global smooth solution. Note that
this is not the solution we took in \eqref{eq.norm2}. 
In the notation of \eqref{eq.norm2} the current 
solution corresponds to 
\begin{gather*}
   q(x, y, \eta) = \frac{ i \zeta (\eta)} { 2 \eta} \left( \int_0^y (V(x,y)
- V_0(x)) dy  - q_0 ( x ) \right) \,, \\ q_0 ( x ) := \frac 1 B
\int_0^B \int_0^y  (V(x,y')
- V_0(x)) dy' dy \,. 
\end{gather*}
\end{proof}

Finally, we have the corresponding analogue of Corollarry \ref{cor.1}.
\begin{cor}\label{cor.2}
In the notation of Proposition \ref{p:2b}, there
exists operators 
\[ W =  W  ( x , y , h D_x , h D_y ) \in \Psi^0 (
\RR^2 ) \,, \ \  R = R ( x, y , h D_x , hD_y ) \in \Psi^{0 } ( \RR^2 )
\,,  \]
 such that
$  (F^{-1})^*  W F^* $ and $  (F^{-1})^*  R F^* $ preserve 
$ A \ZZ \times B \ZZ $ periodicity, and 
\begin{equation}
\label{eq:22b}
\begin{split} 
& (\Id +h Q) \left(  D_x ^2 + D^2_y + V ( x, y ) \right)\chi( hD_x, hD_y)\\
& \ \ \ = \Bigl(\bigl(D_x^2 + D_y^2 + V_0 ( x)\bigr) ( \Id+ hQ) +
W \Bigr) \chi(h D_x,  hD_y)  + h R \,,\\
& \ \ \ \ \ \ \ \ \ \ \ \ \ \ \ \ \ \ \ \ \ \ W ( x, y , 0 , \eta ) \equiv 0 \,. 
\end{split}
\end{equation}
\end{cor}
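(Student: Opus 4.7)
The plan is to reduce the claim to Proposition~\ref{p:2b} plus a single commutator calculation, so that the only new feature to identify is the error coming from the addition of $D_x^2$. First I would split
$$
(\Id + hQ)(D_x^2 + D_y^2 + V)\chi(hD_x, hD_y) = (\Id + hQ)D_x^2\chi + (\Id + hQ)(D_y^2 + V)\chi,
$$
apply Proposition~\ref{p:2b} directly to the second summand, and commute $D_x^2$ to the left in the first:
$$
(\Id + hQ)D_x^2\chi = D_x^2(\Id + hQ)\chi + h[Q, D_x^2]\chi.
$$
Reassembling gives the sought-after identity with $W\chi$ playing the role of $h[Q,D_x^2]\chi$ up to terms that will be lumped into $hR$.

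The key step is the commutator computation. Since $Q=q(x,y,hD_y)$ acts semiclassically only in $y$ and is merely $C^\infty$ in $x$, an elementary operator calculation (modelled on $[M_f,D_x^2] = -\partial_x^2 f + 2i(\partial_x f)D_x$ for a multiplication operator $M_f$) yields
$$
h[Q,D_x^2] = 2i(\partial_x Q)(hD_x) + h\,\partial_x^2 Q.
$$
The second term is $O(h)$ and lands in $hR$. For the first, I would set
$$
W := 2i(\partial_x Q)(hD_x)\widetilde\chi(hD_x,hD_y),
$$
with $\widetilde\chi \in \CIc(\RR^2)$ equal to $1$ on $\supp\chi$. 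Semiclassical composition gives $W \in \Psi^0(\RR^2)$ with principal symbol $2i\xi\,\partial_x q(x,y,\eta)\widetilde\chi(\xi,\eta)$, which manifestly satisfies $W(x,y,0,\eta)\equiv 0$. The discrepancy $W\chi - 2i(\partial_x Q)(hD_x)\chi$ is $O(h^\infty)$ and absorbed into $hR$ together with the remainder from Proposition~\ref{p:2b}.

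For the periodicity statement, I would use that under the orthogonal change of variables $F$ the operators $D_x$ and $D_y$ pull back to the constant-coefficient directional derivatives $\Xi_0^\perp\cdot D_z$ and $\Xi_0\cdot D_z$, which are Fourier multipliers in the $z$-coordinates and hence commute with all translations in $A\ZZ\times B\ZZ$. Since $(F^{-1})^*QF^*$ preserves $A\ZZ\times B\ZZ$-periodicity by Proposition~\ref{p:2b}, so does $(F^{-1})^*(\partial_x Q)F^*$ (differentiation commutes with the pullback up to a constant and preserves periodicity), and therefore $(F^{-1})^*WF^*$ preserves periodicity as well. The same argument applied piecewise gives periodicity preservation for $(F^{-1})^*RF^*$.

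The only mildly delicate point, and the one requiring care, is the semiclassical bookkeeping: one must verify that the various leftover terms (the Proposition~\ref{p:2b} remainder, $h\,\partial_x^2 Q\,\chi$, and the off-support $O(h^\infty)$ discrepancy between $W\chi$ and $2i(\partial_x Q)(hD_x)\chi$) genuinely assemble into a single semiclassical operator $R\in\Psi^0(\RR^2)$ of the stated class. This is routine from the composition rules of the semiclassical calculus, and I do not anticipate any deeper obstacle.
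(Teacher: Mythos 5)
Your proposal is correct and takes essentially the same route as the paper: the paper leaves Corollary~\ref{cor.2} without an explicit proof, pointing implicitly to the proof of Corollary~\ref{cor.1}, where $W$ is built out of the commutator of $Q$ with $D_x^2$ by exactly the symbolic-calculus computation you carry out, yielding principal symbol proportional to $\xi\,\partial_x q(x,y,\eta)\,\widetilde\chi(\xi,\eta)$ and hence vanishing at $\xi=0$. Your supplementary argument that conjugation by $F$ converts $\partial_x$ into the constant-coefficient multiplier $\langle \Xi_0^\perp,\partial_z\rangle$, so that $(F^{-1})^* W F^*$ and $(F^{-1})^* R F^*$ inherit the $A\ZZ\times B\ZZ$ periodicity from the corresponding statement for $Q$, is the right way to fill in the one detail the corollary adds beyond Corollary~\ref{cor.1}.
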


\section{A semiclassical estimate}
\label{se}

The purpose of this section is to prove the main step towards
Theorem~\ref{t:2}, its semiclassically localized version:
\begin{prop}\label{th:3}
Let $\chi\in \CIc (  -1 , 1 )$ be equal to $1$ near $0$, and define
$$ \Pi_{h, \rho} (u_0)  := \chi \left( \frac{ h^2(- \Delta + V) -1}
  {\rho} \right) u_0\,,  \ \ \ \rho > 0 \,. $$
Then for any $T>0$ there exists $\rho, C, h_0 >0$ such that for any
$0<h<h_0$, $u_0$, 
we have  
\begin{equation} \label{eq:semi}
 \|\Pi_{h, \rho} u_0\|_{L^2}^2 \leq C \int_{0}^T \| e^{-i t ( - \Delta +
   V )} \Pi_{h, \rho} u_0\|_{L^2 ( \Omega ) }^2 dt \,.
\end{equation} 
\end{prop}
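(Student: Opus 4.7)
The plan is to argue by contradiction through semiclassical defect measures: a measure $\mu$ associated with a sequence violating \eqref{eq:semi} is decomposed along the direction $\xi/|\xi|$, irrational directions are eliminated by equidistribution of geodesics on $\TT^2$, and each rational direction is reduced via Corollary~\ref{cor.2} to a situation where Proposition~\ref{p:1bis} applies.

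If \eqref{eq:semi} fails, extract $h_n\to 0$ and $u_n = \Pi_{h_n,\rho}u_n$ with $\|u_n\|_{L^2}=1$ and $\int_0^T \|e^{-it(-\Delta+V)}u_n\|_{L^2(\Omega)}^2 \, dt \to 0$. A subsequence of $v_n(t,\cdot) = e^{-it(-\Delta+V)}u_n$ produces a semiclassical defect measure $\mu$ on $(0,T)\times T^*\TT^2$ which is nonzero (by $L^2$ conservation), supported in $\{|\xi|^2\in[1-\rho,1+\rho]\}$ (by the spectral localization of $\Pi_{h,\rho}$), invariant under the geodesic flow $(t,z,\xi)\mapsto(t,z+2s\xi,\xi)$ (propagation for $i h\partial_t + h^2\Delta - h^2 V$, whose semiclassical principal symbol $\tau+|\xi|^2$ is independent of $V$), and satisfies $\mu((0,T)\times\Omega\times\RR^2)=0$. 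For $\xi$ whose direction is not parallel to any $(nA,mB)$ with $(n,m)\in\ZZ^2\setminus\{0\}$, the orbit $\{z+2s\xi:s\in\RR\}$ equidistributes on $\TT^2$; flow invariance then forces the disintegration of $\mu$ at such $\xi$ to be a positive multiple of Lebesgue measure on $\TT^2$, and the vanishing on $\Omega$ kills that multiple. Hence $\mu$ is concentrated on the countable union of rational-direction cones.

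Fix a rational direction $\Xi_0$ of the form \eqref{eq:Xi0} carrying $\mu$-mass. In the coordinates $F$ of \eqref{eq:F} on the covering strip provided by Lemma~\ref{l:si}, $-\Delta = D_x^2 + D_y^2$ and $F^*u$ satisfies \eqref{eq:per}. Choose $\chi\in\CIc(\RR^2)$ supported in a cone of opening $\varepsilon$ about $\Xi_0$ and in $\{|\xi|^2\in[1-\rho,1+\rho]\}$, so that in the rotated frame the transverse momentum $\xi$ has size $O(\varepsilon)$ on its support. Let $\tilde u_n = (\Id + h_n Q)\chi(h_n D)u_n$ with $Q,W,R$ from Corollary~\ref{cor.2}. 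Since $W|_{\xi=0}\equiv 0$, the remainder $W\chi(h_n D)$ has operator norm $O(\varepsilon)$, while $h_n R = O(h_n)$, and a Duhamel estimate on $[0,T]$ yields
\begin{equation*}
\tilde u_n(t) = e^{-it(-\Delta + V_0(x))}\tilde u_n(0) + o_{L^2}(1),\qquad t\in[0,T],
\end{equation*}
provided $\varepsilon$ is taken small first and then $h_n\to 0$. The invertibility of $\Id+h_nQ$ transfers the observation hypothesis to $\tilde u_n$ with an $o(1)$ loss.

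Now $F^*\tilde u_n$ satisfies the Floquet condition~\eqref{eq:per} on the strip $[0,a]\times[0,b]$ and the reduced potential $V_0(x)$ depends only on $x$, so Proposition~\ref{p:1bis} applies with observation region $\omega_x\times[0,b]$, where $\omega_x$ is the $x$-projection of $F^{-1}(\Omega)$ (nonempty since the strip surjects onto $\TT^2$). This gives $\|\tilde u_n(0)\|_{L^2}\to 0$, so $\mu$ has no mass near $\Xi_0$; summing over the countable set of rational directions forces $\mu=0$, a contradiction. The principal obstacle is the two-scale $(\varepsilon,h_n)$ microlocalization: $\varepsilon$ must be small enough to render $W\chi(h_nD)$ negligible and for the Duhamel estimate on $[0,T]$ to close, while only then may $h_n\to 0$ be invoked; this is the Lebeau-type device already used in \cite{BZ3},\cite{Ma}, and is the reason for working with the soft spectral window $\Pi_{h,\rho}$ rather than a sharp energy layer.
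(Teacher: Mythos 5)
Your proposal follows the same overall architecture as the paper (contradiction, semiclassical defect measure, elimination of irrational directions, normal form along a rational direction, reduction to Proposition~\ref{p:1bis}), but the final steps contain a genuine gap and omit a key structural observation.

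\textbf{The isolated-point step is missing.} You localize $\chi$ in a cone of opening $\varepsilon$ about a rational direction $\Xi_0$ ``carrying $\mu$-mass,'' but nothing rules out other rational directions in $M_\mu$ accumulating at $\Xi_0$: the cone at any fixed opening $\varepsilon$ could then still contain infinitely many directions with positive $\tilde\mu$-mass. The paper's argument uses the observation that $M_\mu$ is a countable closed subset of the circle and therefore contains an \emph{isolated} point $\Xi_0$; choosing the cone small enough then forces $M_{\tilde\mu}=\{\Xi_0\}$, which is precisely what makes the microlocal support of $\tilde\mu$ lie in $\{\xi=0,\eta=1\}$ after the rotation $F$. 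Without that, your assertion that the flow lines relevant to $\tilde\mu$ all point in the direction $\Xi_0$ has no justification.

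\textbf{The conclusion $\|\tilde u_n(0)\|_{L^2}\to 0$ does not follow.} Proposition~\ref{p:1bis} bounds $\|\tilde u_n(0)\|_{L^2}^2$ by $\int_0^T\|\tilde u_n(t)\|^2_{L^2(\omega)}\,dt$ with $\omega=\omega_x\times[0,b]$, a set substantially larger than (a lift of) $\Omega$. What you know vanishes is $\int_0^T\|u_n\|^2_{L^2(\Omega)}\,dt$; you have not shown that the observation of the microlocalized $\tilde u_n$ over $\omega$ tends to zero. To close this you must either (i) run the argument as the paper does --- use a support point $(t_0,z_0,\Xi_0)\in\supp\tilde\mu$ to lower-bound the left side of the observability inequality, deduce a mass point of $\tilde\mu$ inside $(0,T)\times T^*\omega$, note its direction must be $\Xi_0$ by~\eqref{eq:mut}, and then propagate by flow invariance into $(0,T)\times T^*\Omega$, contradicting the vanishing there; or (ii) first argue via flow invariance and $M_{\tilde\mu}=\{\Xi_0\}$ that $\tilde\mu$ vanishes on $(0,T)\times T^*\omega$, and only then invoke observability. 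Either route needs the isolated-point argument above. As written, the inference from ``Proposition~\ref{p:1bis} applies'' to ``$\|\tilde u_n(0)\|\to0$'' is a non-sequitur. (A secondary point: your Duhamel estimate gives an $O(\varepsilon)$ error, not $o(1)$, so even under favorable assumptions the output of the two-scale argument is $\limsup_n\|\tilde u_n(0)\|\lesssim\varepsilon$ with $\tilde u_n$ itself depending on $\varepsilon$; the bookkeeping for letting $\varepsilon\to0$ needs to be spelled out. The paper sidesteps this entirely by observing that once $M_{\tilde\mu}=\{\Xi_0\}$, the defect measure of $W_n\chi_n u_n$ is identically zero, so the forcing term is $o_{L^2_{\rm loc}}(1)$ without any $\varepsilon$-loss.)

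Your treatment of the irrational directions (equidistribution plus disintegration of $\mu$) is heavier machinery than needed --- the paper only uses invariance of the \emph{support} of $\mu$ plus density of irrational orbits in $\TT^2$ --- but it is correct.
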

\begin{proof}
We first observe that if the estimate~\eqref{eq:semi} is true for some $\rho>0$,
then is is true for all $0<\rho'<\rho$. As a consequence, if
~\eqref{eq:semi} were false, there would exist $ T > 0 $ and  sequences 
$$ h_n \longrightarrow 0, \quad \rho_n \longrightarrow 0, \quad u_{0,n}=
\Pi_{h_n, \rho_n} (u_{0,n}) \in L^2,  $$
$$  i \partial_t u_n ( t, z ) = ( - \Delta + V( z ) ) u_n ( t, z )
\,, \ \  u_{n} ( 0 , z ) = u_{0,n} ( z ) \,, $$
such that 
$$ 1= \| u_{0,n} \|^2_{L^2} , \qquad \int_{0}^T
\|u_n ( t , \bullet ) \|_{L^2(\Omega)}^2 dt \longrightarrow 0 \,. $$
The sequence $(u_n)$ is bounded in $L^2_{\rm{loc}} ( \mathbb{R} \times
\T^2)$ and consequently, after possibly extracting a subsequence,
there exists  a semi-classical defect measure $\mu$ on $\R_t \times T^* (\T^2_z)$ such
that for any function $\varphi \in 
{\mathcal C}^0_0 ( \R_t)$ and any $a\in \CIc ( T^*\T^2_z)$, we have 
\[\begin{split} \langle \mu, \varphi(t) a(z, \zeta)\rangle &   = \lim_{n\rightarrow  \infty} \int_{\R_t \times \T^2} \varphi(t) 
( a(z, h_n D_z)  u_n ) ( t , z ) \overline u_n (t,z) dt dz \,.
\end{split} \]

\renewcommand\thefootnote{\ddag}%

Furthermore, standard arguments\footnote{see \cite{AM} for a review of recent 
results about measures used for the Schr\"odinger equation.} 
show that the measure $\mu$ satisfies
\begin{itemize}
\item 
\begin{equation}\label{eq.nonvan}
 \mu( (t_0 , t_1 ) \times T^*\T^2_z) = t_1 - t_0  \,. 
\end{equation}
\item  The measure $\mu $ on $\R_t \times T^* (\T^2)$ is supported in the set 
$$\{(t,z, \zeta) \in \R_t \times \T^2_z \times \R^2_\zeta; |\zeta|=1\}$$
and is  invariant  under the action of the geodesic flow:
$$ 2 \langle \zeta , \partial_z \rangle \,  \mu_\varphi  =0.$$
We shall only use that the support of the measure $\mu$ is
invariant: 
\begin{equation}
\label{eq:invm} (t_0, z_0, \zeta_0) \in \text{supp} ( \mu) \ \Longrightarrow \ (t_0,
z+ s\zeta_0, \zeta_0) \in \text{supp} ( \mu)\,, \ \  \forall s\in \R
\end{equation}
\item The measure $\mu$ vanishes on $(0,T) \times T^*\Omega$. 
\end{itemize}
We are going to show that the measure $\mu$ is identically equal to
$0$ on $(0,T) \times T^* \TT^2 $.
This will provide a contradiction  with~\eqref{eq.nonvan}.

\medskip

\begin{rem}\label{rem.leb}
In the case of geometric control, as in the work by Lebeau, the
vanishing of 
$\mu\mid_{(0,T)}$ is a direct consequence of the invariance property. 
Actually, in Lebeau's work, which concerns boundary value problems,
the difficult part is to precisely 
to prove (analogues of) this invariance property. See the appendix 
for more details.
\end{rem}

\medskip

The $z$ projection of a trajectory associated to a irrational
direction $\zeta$ is dense. Consequently, the support of
$\mu\mid_{t\in (0, T)} $ contains only points 
$(t, z , \Xi_0)$ with rational $\Xi_0$:
\begin{equation}
\label{eq:AB} \TT^2 \simeq [0, A ]_x \times [0 , B ]_y \,, \ \  \Xi_0 = \alpha ( A/B ,
n/m ) \,, \ \ n, m \in \ZZ \,, \ \ \alpha \in \RR \setminus \{ 0 \}
\,.  \end{equation}
In fact, that is the condition implying that the trajectory $  
s \longmapsto z_0 + s \Xi_0 $ is closed when projected to $ \TT^2 $,
for any $ z_0 \in \RR^2 $. Any other trajectory is dense. 

Define 
\begin{equation}
\label{eq:Mz}
M_\mu :=  \pi_1 ( \supp \mu \cap \{(t,z, \zeta); t\in (0,T)\}) \,, \ \ \ \pi_1 : ( t , z , \zeta )
\longmapsto \zeta \,. 
\end{equation}

The discussion above shows that $ M $ contains only rational
directions  and hence it is countable and closed.
This in turn implies that it contains an isolated point, $\Xi_0$ (perfect sets cannot be countable). 

We now consider the Schr\"odinger equation on 
on the strip (or rectangle)  $ R = \RR_x \times [0,
b]_y $  ($ R = [0, a]_x \times [0,
b]_y $, respectively) using the function $ F $ given in \eqref{eq:F}.
In this coordinate system, $\Xi_0 = (0,1)$ -- see Fig.~\ref{f:3}.

Let $ \chi(hD_z )$ be a Fourier multiplier with a symbol 
supported in a neighborhood of $\Xi_0$ containing no other points in
the intersection with $(0,T) \times T^* \T^2 $ of the support of $\mu$, and define
$$ \widetilde u_n = \chi (h_n D_z) u_n \,,  $$ 
We denote by $\widetilde \mu$, the semiclassical
measure of the sequence $\widetilde{u}_n$.
 We clearly have
\[ \widetilde \mu = (\chi (\zeta))^2\mu \,, \]
 and consequently, we know that the $\zeta$-projection, $ \pi_1 $, of
 the  intersection with $(0,T) \times T^* \T^2 $ of the support
 of the measure $\widetilde{\mu}$ is equal to $\{\Xi_0\}$:
\begin{equation}
\label{eq:mut} 
 M_{\widetilde \mu} = \{ \Xi_0 \} = \{ ( 0 , 1 ) \} \,,
\end{equation}
where we used the coordinates  $( x, y ) $ in the last identification.

Using Proposition \ref{p:2b} (or, in the easier case of rational tori,
Proposition \ref{p:2}) we define
$$ v_n = \Bigl(1+hQ\Bigr) \widetilde u_n \,. $$
Since the operator $Q$ is bounded on $L^2$,  
the semiclassical defect measures associated to $v_n$ and
$\widetilde{u}_n$ are equal
We now consider the time dependent Schr\"odinger equation satisfied 
by $ v_n $. With 
\[ Q_n := Q ( x, y , h_n D_y ) \,, \ \ R_n := R ( x, y , h_n D_x, h_nD_y ) \,, \
\ W_n := W ( x, y , h_n D_x , h_nD_y ) \,, \]
given in \eqref{eq:22b} and $ \chi_n := \chi ( h_n D_z ) $, we have 
\begin{equation}\label{eq:v}
\begin{split}
 (i \partial_t + \Delta -V_0(x) ) v_n & = ({\Id} + h_n Q_n ) (i \partial_t +
 \Delta - V(x,y) ) \chi  u_n  -W_n   \chi   {u}_n - h_n R_n u_n  
 \\
&  = -W_n  \chi_n  {u}_n + [V, \chi ] u_n + o  _{L^2} (1) \\
&  = -W_n \chi_n  {u}_n+o _{L^2_{x,y}} (1) 
\end{split} 
\end{equation}
We also recall that according to Corollary~\ref{cor.1},  the symbol of the operator $W$ vanishes in the set 
 $$\{x,y,\xi, \eta) \; : \;  \xi=0\} \,. $$ 
Consequently it vanishes on the  intersection with $(0,T) \times T^* \T^2
$ of 
the support of the defect measure of 
$\chi_n  {u}_n= \widetilde{u}_n$ which,  by construction, is  included in the set 
 $$   \pi_1^{-1} ( M_{\widetilde \mu} ) =  \{(t, x,y,\xi, \eta) \; : \;  \xi=0\,, \ \eta =1\}.$$
 As a consequence, the semiclassical measure 
of $W_n  \chi_n   {u}_n$ is equal to $0$. This implies that
 \begin{equation}\label{eq:vbis}
 (i \partial_t + \Delta -V_0(x) ) v_n = o  _{L^2_{\rm{loc}}  (
     (0,T)\times R ) } (1)\,. 
     \end{equation}

In view of Lemma \ref{l:si}, see \eqref{eq:per}, we are nowin the setting of Proposition \ref{p:1bis}. 
To apply it let us choose a band domain ${\omega}
 = \omega_x \times [0, b ]_y $
 where 
$ \omega_x $ is a an interval 
 such that any line $\{x\} \times [0, b]_y $,  encounters
 the interior of $ \pi_2^{-1} ( \Omega) $, where $ 
\pi_2 : R \rightarrow \TT^2 $ -- see Fig.~\ref{figure-2}.

We know that there exists 
$ ( t_0 ,  z_0, \Xi_0 )\in \text{supp} ( \mu)$  for some $t_0 \in
(0,T) $. 

\begin{figure}[ht]\label{figure-2}
\includegraphics[height=7.5cm]{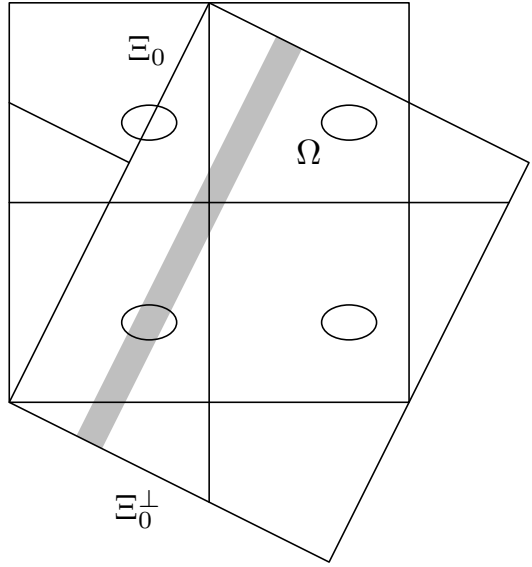}
\caption{The rectangle $ R $, covering a rational torus $ \TT^2  $ and
the choice of $ \omega = \omega_x \times [0, b]_y $ shown as a shaded region.}
\end{figure}


Since according to~\eqref{eq:vbis} on $(0,T)$, the family $(v_n)$ is a
family of solutions of the free Schr\"odinger equations
up to  $o _{L^2_{\rm{loc}} ( ( 0 , T ) \times R ) } (1) $,
we can apply Proposition~\ref{p:1bis} to obtain
\[ \begin{split}   \| v_n\|^2_{L^2 ( (t_0 - \epsilon, t_0 + \epsilon) \times \TT^2)}
 & \leq  \| v_n\|^2_{L^\infty ( (t_0 - \epsilon, t_0 + \epsilon)   \times \TT^2) }  \\
& \leq C \int_{t_0 - 2 \epsilon} ^{t_0 + 2 \epsilon} \int_{{\omega}} |v_n (t,z) |^2 dz +o(1)
\end{split} \]
where $\epsilon >0$ is chosen small enough so that 
 $( t_0 - 2\epsilon , t_0 + 2\epsilon ) \Subset (0, T)$.
This implies that there exists $t'_0\in ( t_0 - \epsilon, 
t_0 + \epsilon) $, $z'_0\in {\omega}$, $\Xi'_0$ such that  
$$ (t'_0, X'_0, \Xi'_0) \in \text{supp}( \widetilde{\mu})\,. $$
From \eqref{eq:mut}  we necessarily have $\Xi'_0= \Xi_0$. The invariance of the
support of $ \widetilde \mu $ shows that the whole line 
$$(t'_0, z'_0+ s \Xi_0, \Xi_0)\in \text{supp}( \widetilde{\mu}) \,. $$
consequently  the support of the measure $\widetilde{\mu}$ does encounter  the set $(0,T) \times T^*\Omega$, which gives the contradiction and concludes the proof of Proposition~\ref{th:3}. \end{proof}

\section{Proof of Theorem \ref{t:2}}
\label{fse}

To prove Theorem \ref{t:2} we need to pass from the semiclassical
estimate of \S \ref{se} to an estimate for all frequencies. 
We start with a result involving an error term:
\begin{prop}\label{prop.4.1}
For any $T>0$ and any non empty open set $\Omega \subset \T^2$, there exists $C>0$ such that for any $u_0 \in L^2(\T^2)$,
\begin{equation}
\label{eq:p41} \|u_0 \|^2_{L^2} \leq C \Bigl( \int_0^T \int_\Omega
|e^{-it (- \Delta + V)} u_0 |^2 dz dt + \|u_0 \|_{H^{-2}}^2\Bigr)
\,. 
\end{equation}
\end{prop}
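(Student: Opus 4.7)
The plan is to combine the semiclassical observability \eqref{eq:semi} with a dyadic spectral decomposition of $u_0$ associated to $P:=-\Delta+V$, applying Proposition~\ref{th:3} at each frequency scale separately and transferring the observations back to $u_0$ via a smooth cutoff and commutator estimates.

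Fix $\chi\in\CIc((-1,1))$ equal to $1$ near $0$ as in Proposition~\ref{th:3}, and let $\rho, C_0, h_0>0$ be the constants it supplies for the given $T$. Choose a geometric sequence $h_k:=c^{-k}$ with $1<c<\sqrt{(1+\rho)/(1-\rho)}$, so that the cutoffs $\chi((h_k^2\lambda-1)/\rho)$ cover $[\lambda_0,\infty)$ with bounded overlap for some $\lambda_0>0$. Build a smooth square partition of unity $\sum_{k\geq 0}\varphi_k^2\equiv 1$ on $[0,\infty)$, with $\varphi_0\in\CIc$ compactly supported, and $\varphi_k(\lambda):=\varphi(h_k^2\lambda)$ for $k\geq 1$ where $\varphi\in\CIc$ is supported on $\{\chi\equiv 1\}$; in particular $\Pi_{h_k,\rho}\varphi_k(P)=\varphi_k(P)$ for $k\geq 1$. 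By the spectral theorem,
\[ \|u_0\|_{L^2}^2=\sum_{k\geq 0}\|\varphi_k(P)u_0\|_{L^2}^2. \]
The finitely many pieces with $h_k>h_0$, together with $\varphi_0(P)u_0$, are spectrally localized in a bounded subset of $\Spec(P)$ and hence contribute at most $C\|u_0\|_{H^{-2}}^2$ in total. For each remaining $k$ set $v_k:=\varphi_k(P)u_0$ and apply Proposition~\ref{th:3} with $\widetilde\Omega\Subset\Omega$ in place of $\Omega$ to obtain
\[ \|v_k\|_{L^2}^2\leq C\int_0^T\|e^{-itP}v_k\|_{L^2(\widetilde\Omega)}^2\,dt. \]

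Pick $\psi\in\CIc(\Omega)$ with $\psi\equiv 1$ on $\widetilde\Omega$ and $0\leq\psi\leq 1$, and write $w(t):=e^{-itP}u_0$. Then $\|e^{-itP}v_k\|_{L^2(\widetilde\Omega)}\leq\|\psi\varphi_k(P)w(t)\|_{L^2}$. Expanding $\psi\varphi_k(P)=\varphi_k(P)\psi+[\psi,\varphi_k(P)]$ and summing over the relevant $k$,
\[ \sum_k\|\psi\varphi_k(P)w\|^2\leq 2\sum_k\|\varphi_k(P)(\psi w)\|^2+2\sum_k\|[\psi,\varphi_k(P)]w\|^2. \]
The functional calculus with $\sum\varphi_k^2\equiv 1$ gives $\sum_k\|\varphi_k(P)(\psi w)\|^2=\|\psi w\|^2$, and $\psi\leq\mathbf{1}_\Omega$ yields $\|\psi w\|^2\leq\|w\|_{L^2(\Omega)}^2$.

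The main obstacle is controlling the commutator sum. Because $\psi$ is smooth, the semiclassical calculus gives $\|[\psi,\varphi_k(P)]\|_{L^2\to L^2}=O(h_k)$ with image frequency-localized at scale $h_k^{-1}$; a Littlewood--Paley-type square-function argument, exploiting the geometric decay of $h_k$ and the almost-disjointness of the supports of the $\varphi_k$, bounds $\sum_k\|[\psi,\varphi_k(P)]w\|^2$ by a negative Sobolev norm of $w$, which by unitarity of $e^{-itP}$ equals the same norm of $u_0$. Pushing the commutator expansion to sufficient order produces the $C\|u_0\|_{H^{-2}}^2$ error stated in \eqref{eq:p41} (for the compactness-based absorption argument used in the sequel any $H^{-s}$, $s>0$, would in fact suffice). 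Integrating in $t\in(0,T)$ and collecting all contributions then yields \eqref{eq:p41}.
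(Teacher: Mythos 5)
Your overall strategy (dyadic spectral decomposition, apply Proposition~\ref{th:3} scale by scale, then undo the decomposition by almost-orthogonality) is the right one, and is the same starting point as the paper. The genuine divergence is in how you undo the spectral localization inside the integral over $\Omega$, and there is a real gap in your version.

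You introduce a spatial cutoff $\psi\in\CIc(\Omega)$, write $\psi\varphi_k(P)=\varphi_k(P)\psi+[\psi,\varphi_k(P)]$, and propose to control $\sum_k\|[\psi,\varphi_k(P)]w\|^2$ by $\|u_0\|^2_{H^{-2}}$. This last step does not work. The commutator $[\psi(z),\varphi(h_k^2P)]$ is a $\Theta(h_k)$ operator: its $h_k$-symbol is $\frac{h_k}{i}\,\partial_z\psi\cdot 2\xi\,\varphi'(|\xi|^2)+O(h_k^2)$, and the leading term is a nonzero, $O(1)$, $h_k$-pseudodifferential operator frequency-localized at scale $h_k^{-1}$. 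So the best you can extract, even with the Littlewood--Paley almost-orthogonality you invoke, is $\sum_k\|[\psi,\varphi_k(P)]w\|^2\lesssim\sum_k h_k^2\,\|\widetilde\varphi_k(P)w\|^2\sim\|w\|_{H^{-1}}^2$. ``Pushing the commutator expansion to sufficient order'' cannot improve this to $H^{-2}$: higher-order terms in the expansion are indeed $O(h_k^2),O(h_k^3),\dots$, but the leading $O(h_k)$ term is already present and does not cancel. So as written your argument proves \eqref{eq:p41} with $\|u_0\|_{H^{-1}}^2$, not $\|u_0\|_{H^{-2}}^2$. Moreover your parenthetical remark that ``any $H^{-s}$, $s>0$, would in fact suffice'' is not quite right for the paper's argument as written: the Cauchy estimate for $v_{\alpha,0}-v_{\beta,0}$ in \S\ref{fse} uses precisely $\lambda^2(1+\lambda)^{-2}\lesssim 1$ and genuinely needs $s\geq 2$. (One \emph{can} rearrange the uniqueness-compactness endgame to work with any $s>0$ --- e.g.\ by showing $N_\delta$ is finite-dimensional for each $\delta<T$ and that $\dim N_\delta$ stabilizes as $\delta\to 0^+$ --- but this is a modification you would have to carry out, not a feature of the argument in the paper.)

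The paper sidesteps the spatial commutator altogether: instead of localizing in $P$ inside $L^2(\Omega)$, it uses the equation $i\partial_t w=Pw$ to replace $\varphi_j(P)$ by $\varphi_j(D_t)$, which commutes with every function of $z$ and so does not see the domain $\Omega$ at all. The only commutator that then appears is between a compactly supported time cutoff $\psi(t)$ and $\varphi_j(D_t)$; since $\varphi_j(D_t)$ is a convolution operator with an $O(R^{-Nj})$ tail off the diagonal and one can insert a $\widetilde\psi(t)$ with $\widetilde\psi\equiv 1$ on $\supp\psi$, the error $E_j$ in \eqref{eq:wideps} is $O(R^{-Nj})$ for all $N$, hence absorbed into $\|u_0\|^2_{H^{-N}}$ for any $N$, in particular $H^{-2}$. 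This is strictly more than the $H^{-1}$ your spatial-commutator approach yields, and is what makes the subsequent Cauchy argument close. If you want to keep your approach, you must either (i) accept the $H^{-1}$ error and redo the uniqueness-compactness argument accordingly, or (ii) adopt the paper's $D_t$-replacement trick; claiming $H^{-2}$ from the commutator expansion as you state it is incorrect.
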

\begin{proof} 
Consider a partition of unity 
\begin{gather*}  1= \varphi_0 (r)^2 + \sum_{j=1}^{\infty}
\varphi_j ( r )^2 \,, \ \ \  \varphi_j ( r ) :=   \varphi(R ^{-j} |r| )
\,, \ \  R > 1 \,, \\
\varphi \in \CIc ( ( R^{-1} , R ) ; [0, 1]  ) \,, \ \  ( R^{-1}, R) \subset \{ r \;
: \;  \chi( r / \rho)  \geq 1/2 \} \,, \end{gather*}
where $ \chi $  and $ \rho $ come from Proposition~\ref{th:3}.
 Then,  we decompose $ u_0 $ dyadically:
\[ \|u_0 \|^2_{L^2} = \sum_{j=0}^\infty  \|\varphi_j( P_V ) u_0
\|_{L^2}^2 \,. \ \ \  P_V := - \Delta + V \,. \]

Let 
$ \psi \in \CIc ( ( 0 , T ) \; [ 0, 1 ] ) $,  $\psi ( t ) > 1/2$, 
or $ T/3 < t < 2T / 3 $.
We first observe that in 
Proposition \ref{th:3} we have actually proved (see the remark after
Proposition \ref{p:1bis}) that 
\begin{equation}
\label{eq:piece}   \|\Pi_{h} u_0\|_{L^2}^2 \leq C \int_\RR \psi( t )^2 \| e^{-i t ( - \Delta +
   V )} \Pi_{h} u_0\|_{L^2 ( \Omega ) }^2 dt \,, \ \ 0 < h < h_0
 \,,\end{equation}
which is the version we will use.

Taking $K$ large enough so that $R^{-Kj} \leq h_0$, where $ h_0 $ is
given above
we apply \eqref{eq:piece} to the dyadic pieces:
\[ \begin{split}
  \|u_0 \|^2_{L^2} & =\sum_j \|\varphi_j( P_V)u_0\|_{L^2}^2 \\
&   \leq \sum_{j=0}^K  \|\varphi_j( P_V )u_0\|_{L^2}^2 +C
\sum_{j=K+1}^\infty  \int_{0}^T \psi ( t ) ^2 \|  \varphi_j(  P_V)
e^{-it P_V } u_0\|_{L^2( \Omega) } ^2 dt \\
& = \sum_{j=0}^K  \|\varphi_j( P_V )u_0\|_{L^2}^2 + C
\sum_{j=K+1}^\infty  \int_\RR \| \psi ( t ) \varphi_j(  P_V)
e^{-it P_V } u_0\|_{L^2( \Omega) } ^2 dt \,. 
\end{split} \]
Using the equation we can replace $ \varphi ( P_V ) $
by $ \varphi ( D_t ) $ which meant that we did not change the domain 
of $ z $ integration. We need to consider the commutator of 
$  \psi \in \CIc ( ( 0 ,  T) ) $ and $ \varphi_j ( D_t ) = \varphi ( R^{-j}  D_t ) $. 
If  
$ \widetilde \psi $  is equal to $ 1 $ on 
$ \supp  \psi$ then the semiclassical pseudodifferential 
calculus with $ h = R^{-j}$ (see for instance \cite[Chapter 4]{EZB}) gives
\begin{equation}
\label{eq:wideps}     \psi ( t )  \varphi_j ( D_t )  =    \psi ( t )  \varphi_j
(D_t )   \widetilde  \psi ( t ) +  E_j ( t, D_t) \,, \ \ 
\partial^\alpha E_j  = {\mathcal O}   ( \langle t \rangle^{-N } \langle
\tau \rangle^{-N} 
R^{- N j } ) \,, 
\end{equation}
for all $ N$ and uniformly in $ j $.

The errors obtained from $ E_j $ can be absorbed into the $ \| u_0
\|_{ H^{-2}  ( \TT^2 ) } $ term on the right hand side. Hence we obtain
\[ \begin{split}
  \|u_0 \|^2_{L^2} & 
\leq C\|u_0 \|_{H^{-2} ( \T^2)} ^2+ C \sum_{j=0}^\infty \int_{0}^T 
 \|   \psi (  t)  \varphi_j(  D_t  ) e^{- it P_V }u_0 \|^2_{L^2( \Omega)} dt
\\
& \leq
\widetilde C\|u_0 \|_{H^{-2} ( \T^2)} ^2+ C   \sum_{j=0}^\infty
\langle   \varphi_j(  D_t  )^2  \widetilde \psi ( t ) e^{-it P_V }u_0,  \widetilde \psi( t )
e^{-it P_V}u_0,  \rangle_{L^2 ( \RR_t \times \Omega)} \\
& = \widetilde  C\|u_0 \|_{H^{-2} ( \T^2)} ^2 + C \int_\RR
  \| \widetilde \psi ( t ) e^{-it P_V } u_0\|^2_{L^2( \Omega)} dt  \\
& \leq \widetilde C\|u_0 \|_{H^{-2} ( \T^2)} ^2 + C \int_{0}^T 
  \| e^{-it P_V } u_0\|^2_{L^2( \Omega)} dt  
\end{split}
\]
where the last inequality is the statement of the proposition. 

\end{proof}

To eliminate the $ H^{-2} $ error term in \eqref{eq:p41} 
we use the now classical uniqueness-compactness argument of Bardos,
Lebeau and Rauch~\cite{BLR}. For reader's convenience we recall the 
argument.

Let us fix $\delta \geq 0$ and define 
\[ N_\delta := \{ u_0 \in L^2(\T^2) \; : \; 
 e^{-it (- \Delta + V)} u_0 \equiv 0 \text{ on } (0  ,T- \delta )\times \Omega\} \,.\]
 Let $u_0 \in N_0$. We now define
\[  v_{\epsilon, 0} = \frac1 \epsilon \left(  e^{-i \epsilon ( - \Delta
    + V ) } - \Id\right) { u_0 }  \,. \]
If $\epsilon \leq \delta$, then 
$   e^{ -t ( - \Delta + V ) } v_{\epsilon, 0 } \equiv 0 $  on $
  ( 0 , T-\delta  ) \times \Omega $

We write $u_0$ in terms of othonormal
eigenvectors of $-\Delta + V$:
$ u_0 = \sum_{\lambda \in \sigma ( - \Delta+ V)} u_{0, \lambda}
e_{\lambda} \,. $ 
Proposition~\ref{prop.4.1} applied with $T$ replaced by $T/2$ gives that for any $0< \alpha, \beta <T/2$, we have 
\[ \begin{split}
 \|v_{\alpha, 0} - v_{\beta, 0}\|_{L^2}^2 & \leq C \| v_{\alpha , 0} 
 v_{\beta, 0 } \|_{H^{-2}} ^2 \\
& \leq C \sum_{\lambda \in \sigma ( - \Delta+ V)} \Bigl| \frac{ e^{-i
    \alpha \lambda} -1 } { \alpha } - \frac{ e^{-i \beta \lambda} -1 }
{ \beta }\Bigr| ^2 ( 1+ \lambda)^{-2} | u_{0, \lambda}|^2
\\
& 
\leq C' \sum_{\lambda \in \sigma ( - \Delta+ V)}  \lambda^2  | \alpha
- \beta |^2  ( 1+ \lambda)^{-2} | u_{0, \lambda}|^2 \leq C' | \alpha - \beta|^2 \,. 
\end{split}
\]
Hence
$ \lim_{\alpha, \beta \rightarrow 0} \|v_{\alpha, 0} - v_{\beta,
  0}\|_{L^2} =0 $, 
and there exists $ v_0 \in L^2 $ such that 
\[  L^2\text{-}\lim_{\alpha \to 0 }  v_{\alpha,0}  = v_0 \,. \]
This limit is necessarily in $N_\delta $ for all $
\delta  >0$, hence in $N_0$. 
On the other hand, we  have in the sense of distributions,
$$e^{-it(- \Delta + V )} v_0= \partial _t e^{-it(- \Delta + V )} u_0
\,, $$
which implies that 
\[ v_0= -i(- \Delta + V ) u_0 \,. \]
Hence $N_0 $ is an invariant
subspace of $-i(- \Delta + V )$.  According to Proposition~\ref{prop.4.1}, $\|u_0\|_{H^{-2}}$ is a norm
on $N_0 $ which is consequently of finite dimension. This means that there exists
an eigenvector 
$w$,
$$ (- \Delta + V )w= \mu w\,, \ \ w|_{\Omega} =0 \,. $$
We can now use the 
the standard unique continuation results 
for elliptic second order operators to conclude that $w \equiv 0$
which then implies that $N_0= \{0\}$. 

Finally, to conclude the proof of Theorem \ref{t:2}, we argue by contradiction: if~\eqref{eq:t2} were not true, we could construct a sequence $(u_{n,0})\in L^2( \T^2)$ such that 
$$ 1= \|u_{n,0} \|_{L^2} , \qquad \int_{0}^T \int_{\Omega}
\bigl|e^{-it(-\Delta+V)} u_{n,0}\bigr| ^2 dx dt 
\longrightarrow 0 \,, \ \  n\longrightarrow \infty \,. $$
We could then extract a subsequence $u_{n_k,0}$ converging weakly in $L^2$ (and hence strongly in $H^{-2}$) to a limit $u_0\in N$ which would satisfy, according to Proposition~\ref{prop.4.1},
\[ 1= \lim_{k\rightarrow \infty}\| u_{n_k,0} \|_{L^2}  \leq C 
  \int_{0}^T \int_{\Omega} \bigl|e^{-it(-\Delta+V)} u_{n_k,0}\bigr| ^2
  dx dt +C \|u_{n_k,0}\|_{H^{-2}} ^2 \,. 
\]
That would imply that 
\[ 1 \leq  C \lim_{k\rightarrow \infty}  \|u_{n_k,0}\|_{H^{-2}} ^2= C
\|u_0\|_{H^{-2}}^2 \,, \]
showing that there exists $u_0 \in N, u_0 \not \equiv 0$ contradicting
our earlier conclusion.
This ends the proof of Theorem~\ref{t:2}

\vspace{0.5cm}
\begin{center}
\noindent
{\sc  Appendix: {Proof of Lemma \ref{lem.2.3}}}
\end{center}
\vspace{0.4cm}
\renewcommand{\theequation}{A.\arabic{equation}}
\refstepcounter{section}
\renewcommand{\thesection}{A}
\setcounter{equation}{0}

To prove \eqref{eq:circ}, we rewrite it as an inequality for 
periodic functions, that is as an inequality on the circle:
\begin{equation}
\label{eq:perA}
\| v_0 \|_{L^2 ( \TT^1 ) }^2 \leq C \int_0^T \| 
e^{ - i t ( D + \beta )^2 + W ) } v_0 \|_L^2 ( \omega_x ) ^2 dt
\,.  \end{equation}
As presented in detail in the second part of \S \ref{fse}, this
follows from the reduction performed in~\eqref{eq.redu} and 
the analogue of estimate \eqref{eq:p41}: 
there exist $C>0$ such that for any $\beta \in [0, 2\pi/a]$, and any $v_0 \in L^2( 0, a)$,
\begin{equation}
\label{eq:perAA}
\|v_0 \|^2_{L^2 ( \TT^1 ) } \leq C \Bigl( \int_0^T
\int_{\omega_x} 
|e^{-it (( D + \beta) ^2  + W )} v_0 |^2 dz dt + \|v_0 \|_{H^{-2} (
  \TT^1 ) }^2\Bigr) 
\,. 
\end{equation}
We remark that the proof in~\S \ref{fse} applies for this setting
where we consider a family of operators, $(D_x + \beta^2 +V, \beta\in
[0, 2\pi/a]$, it could actually handle the more general case  of a
family of potentials $V$,  relatively compact in $L^\infty$.

As shown in Proposition \ref{prop.4.1} this in turn follows from the analogue  
Proposition \ref{th:3}:
for any $T>0$ there exists $ C, h_0 >0$ such that for any
$ \beta \in [ 0 , 2\pi/a  ]  $, $0<h<h_0$, and $v_0 \in L^2 ( \TT^1 ) $,  we have 
\begin{equation} \label{eq:semiA }
 \|\Pi_{h, \beta} v_0\|_{L^2( \TT^1) }^2 \leq C \int_{0}^T \| e^{-i t (
   ( D + \beta  )^2 +
   W )} \Pi_{h, \beta } v_0\|_{L^2 ( \omega_x  ) }^2 dt \,,
\end{equation}
where now, in the notation of Proposition \ref{th:3},
\[  \Pi_{h, \beta} v_0   := \chi \left( { h^2(( D + \beta )^2
    + W ) -1}
\right) v_0\,,  \ \ \beta \in [ 0 ,
2 \pi / a  ] \,.  \]
If this were false there would exist $ T > 0 $ and sequences 
$$ h_n \longrightarrow 0 \,, \quad 
\beta_n \longrightarrow \beta \in [ 0 , 2 \pi/ a ] \,, 
 \quad 
v_{0,n}=
\Pi_{h_n, \beta _n} (v _{0,n}) \in L^2,  $$
$$  i \partial_t v_n ( t, x ) = ( ( D + \beta_n )^2 + W ( x ) ) v_n ( t, x )
\,, \ \  v_{n} ( 0, x ) = v_{0,n} ( x ) \,, $$
such that 
\begin{equation}
\label{eq:vn}  1= \| v_{0,n} \|^2_{L^2 ( \TT^1 ) } , \qquad \int_{0}^T
\|u_n ( t , \bullet ) \|_{L^2(\omega_x)}^2 dt \longrightarrow 0 \,. 
\end{equation}
We associate to the sequence $ v_n $ a semiclassical defect measure, $
\nu $, on $ \RR \times T^* \TT^1 $.  As recalled in \S \ref{se}
(see \cite{AM} and \cite{Mac}) the measure satisfies 
$   \nu ( ( t_0 , t_1 ) \times T^* \TT^1 ) = t_1 - t_0 $, and its
support is invariant under the flow of principal symbol of
$ ( D + \beta )^2 + W ( x ) $ (since $ \beta_n = \beta + o ( 1 )$):
\[ (t_0, x_0, \xi_0) \in \text{supp} ( \nu) \ \Longrightarrow \ (t_0,
x_0 + s\xi_0, \xi_0) \in \text{supp} ( \nu)\,, \ \  \forall s\in \RR
\,. \]
In view of the second part of 
\eqref{eq:vn} the measure $ \nu $ vanishes
on $ ( 0 , T ) \times T^* \omega_x $ which contradicts the invariance
of the support.

\end{document}